\documentclass[11pt,thmsa]{article}%
\usepackage{amsmath}
\usepackage{amsfonts}
\usepackage{amssymb}
\usepackage{graphicx}%
\topmargin -0.8cm
\textwidth 14.5cm
\textheight 22cm
\newtheorem{theorem}{Theorem}[section]

\newtheorem{corollary}[theorem]{Corollary}

\newtheorem{definition}[theorem]{Definition}

\newtheorem{lemma}[theorem]{Lemma}

\newtheorem{proposition}[theorem]{Proposition}

\newenvironment{proof}[1][Proof]{\noindent\textbf{#1.} }{\ \rule{0.5em}{0.5em}}

\begin{document}

\title{Clifford-Wolf Translations of Finsler spaces\footnote{Supported by NSFC (no, 10671096, 10971104) and SRFDP of China}}
\author{  Shaoqiang Deng$^{1,}$ and Ming Xu$^2$
\thanks{Corresponding author. E-mail: dengsq@nankai.edu.cn}\\
$^1$School of Mathematical Sciences and LPMC\\
Nankai University\\
Tianjin 300071, P. R. China\\
$^2$Department of Mathematical Sciences\\
Tsinghua University\\
Beijing 100084, P. R. China}
%\address{}
\date{}
\maketitle
\begin{abstract}
In this paper, we study Clifford-Wolf translations of Finsler
spaces. We first give a characterization of Clifford-Wolf
translations of Finsler spaces in terms of Killing vector fields. In
particular, we show that there is a natural correspondence between
Clifford-Wolf translations and the Killing vector fields of constant
length. In the special case of homogeneous Randers spaces, we give
some explicit sufficient and necessary conditions for an isometry to
be a Clifford-Wolf translation. Finally, we construct some explicit
examples to explain some of the results of this paper.

\textbf{Mathematics Subject Classification (2000)}: 22E46, 53C30.

\textbf{Key words}: Finsler spaces, Clifford-Wolf translations, Killing vector fields, homogeneous Randers manifolds.
\end{abstract}
\section{Introduction}
Let $(X, d)$ be a locally compact connected metric space. A
Clifford-Wolf translation of $(X, d)$ is an isometry $\rho$ of
$(X,d)$ onto itself such that the function $d(x, \rho (x))$ is
constant on $X$. Geometrically, a Clifford-Wolf translation is an
isometry  of the metric space such that all the points in $X$ move
the same distance. In the Riemannian case, it is an important
problem to determine all the Clifford-Wolf translation of an
explicit Riemannian manifold. For example, in the non-positive
curvature case, J. A. Wolf proved that an isometry of the Riemannian
manifold is a Clifford-Wolf translation if and only if it is bounded
(\cite{WO64}). Moreover, if a Riemannian manifold $(M, Q)$ has
strictly negative curvature, then any Clifford-Wolf translation of
$(M, Q)$ must be trivial. The above consequences have an important
corollary that a homogeneous Riemannian manifold with non-positive
curvature admits a transitive solvable Lie group of isometries. This
result is the basis for many works on homogeneous Riemannian
manifolds with negative (non-positive) curvature, see for example
\cite{HE74, AW76}. It should be noted that J. A. Wolf's results have
been generalized to  general Finsler spaces, see \cite{DP}. Using
these results, we can hopefully get a classification of negatively
curved homogeneous Finsler spaces.

The purpose of this article is to initiate the  study of
Clifford-Wolf translations of Finsler spaces. Let $(M, F)$ be a
Finsler space, where $F$ is positively homogeneous of degree one but
not necessary absolutely homogeneous. Then we can define the
distance function $d$ of $(M, F)$. Although generically $d$ is not
reversible, we can define a Clifford-Wolf translation of $(M, F)$ to
be an isometry $\rho$ of $(M, F)$ onto itself such that $d(x, \rho
(x))$ is a constant on $M$. It is an interesting problem to study
the problem that to what extent the results on Clifford-Wolf
translations of Riemannian manifolds still hold for Finsler spaces.
In particular, it is hopeful that some new phenomena will happen in
this more general case.

In the paper we first give a characterization of the Clifford-Wolf translations of Finsler spaces, in terms of Killing vector fields. In particular, we show that there is a natural correspondence between Wolf-Clifford translations and the Killing vector fields of constant length. In the special case of homogeneous Randers spaces, we give some explicit sufficient and necessary conditions for an isometry to be a clifford-Wolf translation. Finally, we construct some explicit examples to explain some of the results of this paper.

The arrangement of the paper is as the following: In Section 2, we present some fundamental knowledge on Finsler geometry, in particular the definition of the Riemann curvature of Finsler metrics. In Section 3, we study Clifford-Wolf translations of general Finsler spaces and obtain a characterization of Clifford-Wolf translations in terms of Killing vector fields of constant length. In Section 4, we apply the results to homogeneous Randers spaces and obtain some explicit description of Clifford-Wolf translations. Finally, in Section 5, we construct some examples in some special compact Lie groups to explain some of the results.

\section{Preliminaries}
\begin{definition}
A Finsler metric on a manifold $M$ is a function $F:TM\rightarrow \mathbb{R}^+$, which is smooth on the slit tangent bundle $TM\backslash \{0\}$. In any
local coordinates $(x^i,y^j)$ for $TM$, with $x^i$'s the local coordinates for $M$ and
$y^j$'s the local coordinates for the tangent vectors, $F$ satisfies the following conditions:
\begin{description}
\item{(1)}\quad $F(x,y)>0$ for any $y\neq 0$.
\item{(2)}\quad $F(x,\lambda y)=\lambda F(x,y)$ for any $y\in TM_x$ and $\lambda >0$.
\item{(3)}\quad The Hessian matrix defined by $g_{ij}=\frac{1}{2}[F^2]_{y^i y^j}$ is positive definite.
\end{description}
\end{definition}

A Finsler metric $F$ is called reversible if it satisfies $F(x,y)=F(x,-y)$.

A Finsler metric of the form $F=\alpha+\beta$, where $\alpha$ is a Riemannian metric and $\beta$ is a 1-form,
 is called a Randers metric. In this case, the positive definiteness of the metric is equivalent to the condition that the length of  $\beta$ with respect to the metric $\alpha$ is everywhere smaller than $1$. A Randers metric is reversible if and only if it is Riemannian.

As a generalization of Riemannian geometry, almost all concepts in Riemannian geometry have
their counterpart in Finsler geometry. The connections and curvatures have already
been discussed in a lot of works, see for example \cite{BCS00} and \cite{CS04}. Below we only briefly recall the notion of Riemann curvature of a Finsler space. This will be useful in the following sections.

Let $(M, F)$ be a connected Finsler space and $V$ a nowhere zero vector field on a
  open subset $U$ of $M$. Then we can define an affine connection on the tangent bundle $TU$ over $U$, denoted by $\nabla^V$,
  such that the following holds:
  \begin{description}
  \item{(a)}\quad $\nabla^V$ is torsion-free:
  $$\nabla^V_X Y-\nabla^V_YX=[X,Y],$$
   for all vector fields $ X, Y$ on $U$,
  \item{(b)}\quad $\nabla^V$ is almost metric-compatible:
  $$Xg_V(Y, Z)=g_V(\nabla^V_XY, Z)+g_V(Y, \nabla^V_XZ)+2C_V(\nabla^V_XV, Y, Z),$$
   for all vector fields $ X, Y,Z$ on $U$.
    \end{description}
In the above formulas, $g_V$, resp. $C_V$,  is the fundamental tensor, resp.  Cartan tensor, of $(M, F)$. It is worthwhile to point out that  the conditions (a), (b) can be used to deduce an explicit formula for the Chern connection, see \cite{RA} for the details.

A nowhere zero vector field $V$ on $U$ is called a geodesic vector field if $\nabla^V_VV=0$. This is equivalent to the condition that the flow lines of $V$ are all geodesics. In this case, $V$ is also a geodesic vector field with respect to the Riemannian metric $g_V$. The vector field $V$  is called parallel if $\nabla^V_XV=0$, for any vector field $X$ on $U$.

The above  affine connection has close relation to the Chern connection presented in \cite{BCS00} and \cite{CS04}. In particular, using the properties
of the Chern connection we have
the following conclusion: if $V, W$ are two nowhere zero vector fields on $U$ and $V(x)=W(x)$, then
$$(\nabla^V_XY)(p)=(\nabla^W_XY),$$
for any vector fields $X, Y$ on $U$. This fact enable us to define the covariant derivative  along any curve. Let
$c(t)$ be a curve on $U$ and $V, X$ be  vector fields along $c$, with $V$ nowhere zero. Extending $V$, $X$ and $c'$ to a subset
containing $c$, we can define $(\frac{\nabla^V}{dt} X)(t)=(\nabla_{c'}^V X)(t)$. The above fact shows that
this is independent of the extensions. In case $V$ and $c'$ coincide, we denote the derivative simply by $\frac{\nabla}{dt}$.

Now we define the Riemann tensor of $(M, F)$. Let $X$ be a non-zero tangent vector of $M$ at $x$ and $c_X$ be the geodesic starting from $x$ with initial vector $X$. Given $Y\in T_x(M)$, we can construct a geodesic variation with variation vector field $Y(s,t)$ such that
$Y(0,0)=Y$. We then define
$$R^X(Y)=\frac{\nabla^2}{dt}Y(s, 0)|_{s=0}.$$
It can be checked that the right hand of the above equation does not depend on the choice of the variation. It is called  the Riemann curvature operator  of $(M, F)$. See Shen's book \cite{SH2} for more details.

The following result is very important, see \cite{SH2} for a proof.
\begin{lemma}
 {\rm (Shen \cite{SH2})}\quad For any nonzero vector $X$ in $T_x(M)$ with a nonzero geodesic field $V$ extending $X$ in an open neighborhood $U$ of $x$, the Riemann curvature operator $R^X$ of the Finsler space coincides with the Jacobi operator $\bar{R}^X$ of the osculating Riemannian  metric $g_V$.
 \end{lemma}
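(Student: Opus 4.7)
The plan is to realize $R^X(Y)$ through the Finsler Jacobi equation along the geodesic $c_X$, and then exhibit the very same Jacobi equation as the Riemannian one for $g_V$, so that the two curvature operators agree on $T_x M$. The hypothesis that $V$ is a nonzero geodesic vector field extending $X$ is exactly what reduces the problem to a computation along a single curve.

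First I would record two preparatory observations already embedded in the excerpt. Since $V$ is a geodesic vector field of $F$, its integral curves are $F$-geodesics, and by the remark preceding the lemma they are also $g_V$-geodesics; in particular $c_X$ equals an integral curve of $V$ for small $t$, so $c_X$ is simultaneously a geodesic of $F$ and of $g_V$. Next, along $c_X$ the Chern-type connection $\nabla^V$ and the Levi-Civita connection $\bar\nabla$ of $g_V$ coincide: both are torsion-free, and the almost metric-compatibility identity (b) in the excerpt differs from full metric-compatibility only by the Cartan term $2C_V(\nabla^V_{\cdot} V,\cdot,\cdot)$, which vanishes when we differentiate in the direction $c' = V$ because $\nabla^V_V V = 0$. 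Uniqueness of the Levi-Civita connection then forces $\nabla^V_{c'} W = \bar\nabla_{c'} W$ for every vector field $W$ along $c_X$.

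Second, given $Y \in T_x M$, I would pick a geodesic variation $c(s,t)$ of $c_X$ with variation field $J(t) = \partial_s c(0,t)$ satisfying $J(0) = Y$ as in the definition of $R^X$. The standard Finsler computation (using torsion-freeness of $\nabla^V$ and the definition of Riemann curvature through $\tfrac{\nabla^2}{dt^2}\partial_s c$) shows that $J$ satisfies the Finsler Jacobi equation
\begin{equation*}
\frac{\nabla^2}{dt^2}J + R^{c'}(J) = 0
\end{equation*}
along $c_X$, from which $R^X(Y)$ is recovered by evaluating at $t=0$ with appropriate initial data. By the identification of the connections along $c_X$ from the previous step, the same operator $\frac{\nabla^2}{dt^2}$ equals $\frac{\bar\nabla^2}{dt^2}$ when acting on vector fields along $c_X$; and because $c_X$ is also a $g_V$-geodesic, $J$ is then a Riemannian Jacobi field along $c_X$, so it likewise satisfies $\frac{\bar\nabla^2}{dt^2}J + \bar R(J, c')c' = 0$. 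Matching the two equations for the same initial data at $t=0$ yields $R^X(Y) = \bar R(Y,X)X = \bar R^X(Y)$.

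The main obstacle is making rigorous the claim that the two Jacobi equations compare term-by-term. The issue is that the Finsler Riemann operator at $X$ is a priori defined through geodesics and covariant derivatives of $F$, whose behavior away from the curve $c_X$ need not match that of $g_V$; one cannot simply declare $\nabla^V = \bar\nabla$ everywhere. What saves the argument is exactly the content of the identification along $c_X$: the whole definition of $R^X(Y)$ only requires derivatives of the variation field at points of $c_X$, and there the connections, their torsions, and (because $c_X$ is geodesic for both structures) the Jacobi equation coincide. This localization of the computation to the geodesic is the heart of the lemma and the point at which the hypothesis "$V$ is a geodesic extension of $X$" is essential.
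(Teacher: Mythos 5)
The paper itself offers no proof of this lemma; it is quoted from Shen's book with a citation. Judged on its own terms, your argument has the right skeleton --- localize everything to the curve $c_X$, which is a geodesic for both $F$ and $g_V$, and compare the two Jacobi equations there --- but the decisive step is a non sequitur. A variation through $F$-geodesics is not a variation through $g_V$-geodesics, so its variation field $J$ satisfies the Finsler Jacobi equation $\frac{\nabla^2}{dt^2}J+R^{c'}(J)=0$, but there is no reason for it to satisfy the Riemannian one $\frac{\bar\nabla^2}{dt^2}\hat J+\bar R(\hat J,c')c'=0$: the fact that the base curve $c_X$ is a geodesic for both metrics says nothing about the neighbouring curves $t\mapsto c(s,t)$ for $s\neq 0$. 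Your closing paragraph names exactly this worry and then waves it away. Even granting that $\frac{\nabla}{dt}=\frac{\bar\nabla}{dt}$ on fields along $c_X$, what you obtain is $\frac{\bar\nabla^2}{dt^2}J(0)=-R^X(Y)$ for an $F$-Jacobi field $J$ and $\frac{\bar\nabla^2}{dt^2}\hat J(0)=-\bar R^X(Y)$ for a \emph{different} field $\hat J$ that agrees with $J$ only to first order at $t=0$; second covariant derivatives of two fields agreeing to first order need not agree, so the conclusion does not follow.

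The missing idea that closes the gap is to use one specific variation that is geodesic for both structures simultaneously: take any curve $\gamma(s)$ with $\gamma(0)=x$, $\gamma'(0)=Y$, and let $c(s,t)$ be the integral curve of $V$ through $\gamma(s)$. Since $V$ is a geodesic field, each $t\mapsto c(s,t)$ is a geodesic of $F$ and, by the remark preceding the lemma, also of $g_V$; hence the single field $J(t)=\partial_s c(0,t)$, with $J(0)=Y$, satisfies both Jacobi equations, and the independence of $R^X(Y)$ from the choice of geodesic variation makes this choice legitimate. Combined with $\frac{\nabla}{dt}=\frac{\bar\nabla}{dt}$ along $c_X$ this yields $R^X(Y)=\bar R(Y,X)X$. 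A secondary under-justified point: your appeal to uniqueness of the Levi-Civita connection to identify $\nabla^V_{c'}$ with $\bar\nabla_{c'}$ requires not only $\nabla^V_VV=0$ but also the identity $C_V(\cdot,V,\cdot)=0$, because the Koszul formula for $\bar\nabla_VY$ also differentiates $g_V$ in the directions $Y$ and $Z$, where the correction terms are $2C_V(\nabla^V_YV,V,Z)$ and $2C_V(\nabla^V_ZV,V,Y)$; these vanish because the Cartan tensor is annihilated by its reference vector, not because $V$ is geodesic. With that supplement your first step is sound, but the main gap above must be repaired for the proof to stand.
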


Note that $V$ is nowhere zero on $U$, so that the Riemannian metric $g_V$ is well-defined on $U$. Also recall that
the Jacobi operator $\bar{R}^X$ of $g_V$ is defined by
$$\bar{R}^X(Y)=R(Y, X) X,\quad Y\in T_x(M),$$
where $R$ is the curvature tensor of $g_V$.

Now we generalize the notions of   Clifford-Wolf translations and Killing vector fields  to Finsler
geometry.
\begin{definition}
A Clifford Wolf translation $\rho$ is an isometry of the Finsler manifold $(M,F)$ which
satisfies the condition that the distance from any $x\in M$ to its image $\rho(x)$ is a constant.
\end{definition}
It should be be noted that, unlike in  the Riemannian case, the reverse $\rho^{-1}$ of
a Clifford Wolf translation $\rho$ of a Finsler space may not be a Clifford Wolf translation, for a Finsler metric  may not be
reversible.

In \cite{DH02}, it is proved the group of isometries of a Finsler space   $(M,F)$ is a Lie transformation group on $M$. Its Lie algebra is
the space of Killing vector fields. Equivalently, a smooth vector field on $(M,F)$ is a Killing vector field if and only if the flow $\phi_t$  generated by $X$  are
isometries of $(M,F)$.

\section{Clifford-Wolf translations and Killing vector fields of constant length}

 Clifford-Wolf translations and Killing vector fields of constant length
are closely related. In the Riemannian case, if a Killing vector field $X$ generates a family of
Clifford-Wolf translations $\varphi_t$, where $t$ is close enough to $0$, then $X$ must have
 constant length. If we assume the Riemannian manifold to be compact,
 then any Clifford Wolf translation close to
the identity map (in the topology of the Lie group of isometries) is generated by a Killing vector field of constant length (see \cite{BN081, BN082, BN09}). We now show that  these
statements are also correct in the Finsler case.

\begin{lemma}\label{b}
Let $X$ be a Killing vector field on a Finsler space $(M,F)$ and $U\subset M$ be a open subset such that $X$ is nowhere zero on $U$.  Then on $U$ we have $\nabla^X_X
X=-\frac{1}{2}\tilde{\nabla}^{(X)}|X|^2$, where $\tilde{\nabla}^{(X)}
f=g_{ij}(X)f_{x^j}\frac{\partial}{\partial x^i}$ is the gradient
field of $f$ for the Riemannian metric $g_X=g_{ij}(X)$ on $V$.
\end{lemma}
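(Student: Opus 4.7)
The plan is to reduce this identity to the classical Riemannian fact that $\nabla^g_X X=-\tfrac12\nabla^g|X|^2$ for any Killing field $X$ of a Riemannian metric $g$. Concretely, I would establish two things: (i) $X$ is itself a Killing vector field of the osculating Riemannian metric $g_X$ on $U$; (ii) the quantity $g_X(\nabla^X_X X,Z)$, computed via the defining properties (a) and (b) of $\nabla^X$, equals exactly what the Koszul computation for the Levi-Civita connection of $g_X$ produces when $X$ is pushed through itself.

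For (i), I would argue as follows. The flow $\phi_t$ of $X$ consists of $F$-isometries (this is the content of $X$ being a Killing field in the Finsler sense recalled after Definition~2.3), and of course $(\phi_t)_*X=X$ along its own flow. Since $g_V(p)$ is by definition the fiberwise Hessian of $\tfrac12F^2$ at the tangent vector $V_p$, and since an $F$-isometry preserves both the fiber-Hessian of $F^2$ and the point-vector pair $(p,X_p)\mapsto(\phi_t(p),X_{\phi_t(p)})$, one gets $\phi_t^*g_X=g_X$ on $U$. Hence $X$ is a Killing vector field of the honest Riemannian manifold $(U,g_X)$, which in the usual form reads
$$X g_X(Y,Z)-g_X([X,Y],Z)-g_X(Y,[X,Z])=0\quad\text{for all }Y,Z.$$
Setting $Y=X$ gives the working identity $X g_X(X,Z)=g_X(X,[X,Z])$.

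For (ii), I would apply property (b) of $\nabla^X$ with $Y=Z$ replaced by $X$ and with a free test field $Z$, and use the torsion-free property (a) in the form $\nabla^X_X Z=\nabla^X_Z X+[X,Z]$. Each Cartan-tensor term that arises carries $X$ in one of its three slots, so it vanishes by the standard homogeneity identity $C_V(\cdot,V,\cdot)\equiv0$ (which follows from $g_{ij}(x,y)$ being $0$-homogeneous in $y$). After these two manipulations and the use of $Z g_X(X,X)=2g_X(\nabla^X_Z X,X)$, one obtains
$$g_X(\nabla^X_X X,Z)=Xg_X(X,Z)-\tfrac12 Z|X|^2-g_X(X,[X,Z]).$$
Plugging in the Killing identity from step (i) collapses the first and third terms, giving $g_X(\nabla^X_X X,Z)=-\tfrac12 Z|X|^2$ for every $Z$, which is precisely the assertion $\nabla^X_X X=-\tfrac12\tilde\nabla^{(X)}|X|^2$ since by definition $\tilde\nabla^{(X)}f$ is the $g_X$-gradient of $f$.

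The principal obstacle I foresee is step (i): the Finsler Killing condition does not a priori say that $g_X$ (a metric that itself depends on $X$) is flow-invariant, so one must carefully track how $\phi_t$ acts simultaneously on base points and on the fiber-vector $X_p$ used to define $g_X$. Once that invariance is in hand, the remaining work is formal — almost-metric-compatibility plus torsion-freeness of $\nabla^X$, together with the vanishing of the Cartan tensor whenever $X$ occupies a slot — and the Riemannian computation goes through verbatim.
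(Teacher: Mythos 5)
Your proof is correct, but it follows a genuinely different route from the paper's. The paper straightens the Killing field in local coordinates ($X=\partial/\partial x^1$, so that $F$ is independent of $x^1$), writes $\nabla^X_XX=2G^i(X)\partial/\partial x^i$ via the explicit spray coefficients $G^i=\tfrac14 g^{il}([F^2]_{x^my^l}y^m-[F^2]_{x^l})$, and observes that at $y=X=(1,0,\dots,0)$ the first term reduces to $[F^2]_{x^1y^l}=0$ while the second is exactly $-\tfrac12\tilde\nabla^{(X)}|X|^2$. You instead run the coordinate-free Koszul argument through the axioms (a) and (b) of $\nabla^X$, using total symmetry and the Euler identity $C_V(\cdot,\cdot,V)=0$ to kill the Cartan corrections (in your two applications of (b) the terms are $C_X(\nabla^X_XX,X,Z)$ and $C_X(\nabla^X_ZX,X,X)$, both of which do carry $X$ in a slot), and you supply the needed input that $X$ is Killing for the osculating metric $g_X$ by noting $\phi_t^*F=F$ together with $(\phi_t)_*X=X$. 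That invariance step is sound, and in fact the paper itself invokes exactly this fact ("$X$ is also a Killing vector field for $g_X$") in the proof of its Corollary 3.6, though there it is extracted from the adapted coordinates rather than argued intrinsically as you do. The paper's computation is shorter and self-contained given the spray formula; yours is more conceptual, makes transparent why the Riemannian identity survives (the only possible obstruction is the Cartan tensor, which vanishes when contracted with the reference vector), and avoids choosing coordinates. Both are complete proofs.
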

\begin{proof}
 Choose a local coordinate system $x=(x^i)$ with
$X=\frac{\partial}{\partial x^1}$, which implies that the flow $\phi_t$
defined by $X$ is just a shift of $x^1$ by $t$ in this chart. The assumption that $X$ is a
Killing vector field for $F$ means that $F(x,y)$ is independent of $x^1$. By
definition,
\begin{eqnarray}
\nabla^X_X X &=& 2G^i(X)\frac{\partial}{\partial x^i} \nonumber \\
      &=& \frac{1}{2}g^{il} ([F^2]_{x^m y^l}y^m - [F^2]_{x^l})
      \nonumber \\
      &=& \frac{1}{2}g^{il}(X)([F^2]_{x^m y^l}y^m)(X)\frac{\partial}{\partial x^i} -
      \frac{1}{2}\tilde{\nabla}^{(X)}|X|^2.
\end{eqnarray}
Consider the value  at $X$. Since $y^2=\cdots=y^n=0$, and $y^1=1$, we have
\begin{equation}
\frac{1}{2}g^{il}(X)[F^2]_{x^m
y^l}y^m(X)=\frac{1}{2}g^{il}(X)[F^2]_{x^1 y^l}=0,
\end{equation}
since by the assumption $F^2$ is independent of $x^1$. This completes the proof.
\end{proof}

As an immediate consequence, we have
\begin{corollary}
If $X$ is a Killing vector field on $(M,F)$, then all flow curves of $X$ are
geodesics if and only if $X$ has constant length.
\end{corollary}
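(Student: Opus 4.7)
The plan is to derive the corollary directly from Lemma~\ref{b}. On the open set $U = \{p \in M : X(p) \neq 0\}$ the preceding lemma gives
\[
\nabla^X_X X = -\tfrac{1}{2}\,\tilde{\nabla}^{(X)}|X|^2.
\]
Since $\tilde{\nabla}^{(X)}$ is the Riemannian gradient with respect to the positive-definite metric $g_X$, the right-hand side vanishes at a point if and only if the differential of $|X|^2$ vanishes there. Recalling from Section~2 that a nowhere-zero vector field $V$ has geodesic flow lines iff $\nabla^V_V V = 0$, the whole corollary reduces to showing that ``$|X|^2$ is locally constant on $U$'' is equivalent to ``$|X|$ is constant on $M$''.

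The direction ``constant length $\Rightarrow$ geodesic flow'' is then immediate: if $|X|$ is constant on $M$, the gradient term vanishes on $U$ and Lemma~\ref{b} yields $\nabla^X_X X = 0$ on $U$; at points where $X(p)=0$ the flow curve is simply the point $p$, which is a (degenerate) geodesic. No further work is needed here.

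For the converse, assume that every flow curve of $X$ is a geodesic. Then $\nabla^X_X X = 0$ throughout $U$, so by Lemma~\ref{b} the function $|X|^2$ is locally constant on $U$, i.e.\ constant on each connected component. I expect the only real step to be promoting this to global constancy on $M$. I would argue by connectedness: suppose $V$ is a connected component of $U$ on which $|X| \equiv c$ with $c>0$; its boundary $\partial V$ in $M$ lies in $M \setminus U = \{X = 0\}$, so $|X|$ equals $0$ on $\partial V$ by evaluation and equals $c$ on $\partial V$ by continuity from inside $V$, forcing $\partial V = \emptyset$. Since $M$ is connected, this gives $V = M$. The remaining possibility is $U = \emptyset$, in which case $X \equiv 0$ and $|X| \equiv 0$. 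Either way $|X|$ is constant on $M$, which finishes the proof.
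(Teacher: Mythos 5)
Your proof is correct and follows exactly the route the paper intends: the paper states this corollary as an ``immediate consequence'' of Lemma~\ref{b} with no written proof, and your argument is precisely that deduction, reading off that $\nabla^X_X X=0$ on $\{X\neq 0\}$ is equivalent to the vanishing of the gradient of $|X|^2$ there. The only thing you add beyond what the paper leaves implicit is the careful open--closed/connectedness argument ruling out a nonempty zero set of $X$ when a component of $\{X\neq0\}$ carries a positive constant length, which is a legitimate (and welcome) tightening rather than a different approach.
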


The relations between the Clifford-Wolf translations and Killing vector fields for a Finsler manifold
are stated in the following two theorems.

\begin{theorem}
Suppose the complete Finsler manifold $(M,F)$ has a
positive injective radius. If $X$ is a Killing
field on $(M,F)$ of constant length and $\varphi_t$ is the flow generated
by $X$, then $\varphi_t$  is a Clifford Wolf translation for all sufficiently small
$t>0$.
\end{theorem}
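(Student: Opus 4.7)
The plan is to reduce the distance computation to a length computation along flow curves, and then use the injectivity radius hypothesis to conclude that the flow curves realize the distance for small time.

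First I would invoke the Corollary immediately preceding the theorem: since $X$ is a Killing vector field of constant length, every integral curve of $X$ is a geodesic of $(M,F)$. In particular, for every $x \in M$ the curve $t \mapsto \varphi_t(x)$ is a (forward) geodesic whose velocity vector at each point equals $X(\varphi_t(x))$. Because the $F$-length of $X$ is some constant $c \geq 0$, this geodesic has constant $F$-speed $c$, and the arc length of the segment $\varphi_{[0,t]}(x)$ equals $tc$ for every $t > 0$. If $c = 0$ then $X \equiv 0$ and the conclusion is trivial, so assume $c > 0$.

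Next I would use the hypothesis of positive injectivity radius. Let $i_0 > 0$ be a uniform lower bound for the Finsler injectivity radius, so that for every $x \in M$ the forward exponential map $\exp_x$ is a diffeomorphism on the ball of $F$-radius $i_0$, and every forward geodesic segment issuing from $x$ of $F$-length less than $i_0$ is the unique minimizer to its endpoint. Choose $t_0 > 0$ with $t_0 c < i_0$. Then for all $0 < t < t_0$ and every $x \in M$, the flow curve segment from $x$ to $\varphi_t(x)$ is a forward-minimizing geodesic of length $tc$, whence
\begin{equation}
d(x,\varphi_t(x)) = tc.
\end{equation}
Since this value is independent of $x$ and $\varphi_t$ is an isometry of $(M,F)$ (Killing fields integrate to isometries), $\varphi_t$ is a Clifford--Wolf translation for all $0 < t < t_0$.

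The main point to be careful about is the lack of reversibility: one must consistently use forward geodesics and the forward injectivity radius when identifying $d(x,\varphi_t(x))$ with the $F$-length of the flow segment. The initial velocity of the minimizing geodesic from $x$ to $\varphi_t(x)$ must be $X(x)$, not $-X(x)$, which is guaranteed because the integral curve of $X$ itself already has $X(x)$ as its initial velocity and has length $< i_0$. Once this orientation issue is settled, the argument is essentially a one-line consequence of the Corollary plus the uniform injectivity radius, so I expect no further obstacle.
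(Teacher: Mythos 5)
Your proof is correct and follows essentially the same route as the paper: both use the preceding corollary to see that the flow curves are geodesics of constant speed $l$, and then use the positive injectivity radius to conclude that for $tl$ below that radius the flow segment realizes the distance, giving $d(x,\varphi_t(x))=tl$ independent of $x$. Your added remark about consistently using forward geodesics to handle non-reversibility is a worthwhile point of care that the paper leaves implicit.
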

\begin{proof}
 Denote the injective radius of $(M, F)$  by $r>0$  and suppose the constant
length $F(X)$ of $X$ is $l>0$. For any $t\in(0,r/l)$, the geodesic
flow curve from $x$ to $\phi_{t}(x)$ is contained in the geodesic
ball $B_r(x)$. Then the distance from $x$ to $\phi_t(x)$ is the
length of flow curve from $x$ to $\phi_t(x)$, which is $tl$. From
this the theorem follows.
\end{proof}

\begin{theorem}
Let $(M,F)$ be a compact Finsler space. Then there is a
$\delta>0$, such that any Clifford-Wolf translation $\rho$ with
$d(x,\rho(x))<\delta$ is generated by a Killing vector field of constant
length.
\end{theorem}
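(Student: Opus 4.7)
The plan is to exploit that, by \cite{DH02} and the compactness of $M$, the isometry group $I(M,F)$ is a compact Lie group whose Lie algebra $\mathfrak{i}$ is the space of Killing vector fields, and for which the Lie exponential map is a local diffeomorphism at the identity. Choose a neighborhood $V$ of $0\in\mathfrak{i}$ that is mapped diffeomorphically onto an open neighborhood $U$ of the identity; taking $\delta$ small enough forces any Clifford--Wolf translation $\rho$ with $d(x,\rho(x))<\delta$ to lie in $U$, so that $\rho=\phi_1$, where $\phi_t$ is the flow of a uniquely determined Killing field $X\in V$. The theorem then reduces to proving that this $X$ has constant length equal to the Clifford--Wolf constant $c:=d(x,\rho(x))$.

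For the lower bound $F(X(x))\ge c$, I would note that $X$ is invariant under its own flow, $d\phi_t(X(x))=X(\phi_t(x))$, and that $\phi_t$ is an isometry of $F$, so $F(X(\phi_t(x)))=F(X(x))$. Hence the flow curve $t\mapsto\phi_t(x)$, $t\in[0,1]$, from $x$ to $\rho(x)$ has Finsler length exactly $F(X(x))$, and bounding distance by the length of a connecting curve gives $c\le F(X(x))$ for every $x\in M$.

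For the reverse inequality the main tool is Lemma~\ref{b}, together with the flow invariance of the function $f(x):=F(X(x))$, which follows from the identities above. Assuming $c>0$ (otherwise $\rho=\mathrm{id}$ and the theorem is trivial), $X$ is nowhere zero, so Lemma~\ref{b} applies on all of $M$. By compactness $f$ attains a maximum and a minimum; at any critical point $x_0$ of $f$ one has $df(x_0)=0$, hence $\tilde{\nabla}^{(X)}|X|^2(x_0)=0$, so $\nabla^X_XX(x_0)=0$. Because $f\circ\phi_t=f$, every point of the flow orbit through $x_0$ is again critical, and therefore $\nabla^X_XX$ vanishes along the whole orbit. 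Consequently the flow curve through $x_0$ is a geodesic of constant speed $f(x_0)$, running from $x_0$ to $\rho(x_0)$ with length exactly $f(x_0)$. If $\delta$ is chosen small enough that $\sup_xF(X(x))$ stays below the injectivity radius of $(M,F)$ for every $X$ in the corresponding Lie-algebra neighborhood, uniqueness of short geodesics forces this geodesic to coincide with the minimizing one, giving $f(x_0)=c$. Applying this at both a maximum and a minimum of $f$ yields $f\equiv c$, so $X$ has constant length.

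The main obstacle is arranging the uniformity in this last step: $\delta$ must be small enough not only to place $\rho$ in the exponential neighborhood of the identity in $I(M,F)$, but also so that the resulting $X$ has $\sup_xF(X(x))$ strictly below the injectivity radius of $(M,F)$, which is positive by compactness of $M$. Continuity of the Lie exponential at $0\in\mathfrak{i}$ together with compactness of $I(M,F)$ allow $V$, and hence $\delta$, to be shrunk so as to guarantee this; but matching these quantitative scales is the delicate step where the argument requires the most care.
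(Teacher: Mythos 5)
Your proposal is correct and follows essentially the same route as the paper: realize $\rho$ as $\phi_1$ for a Killing field $X$ in an exponential neighborhood of the identity in the compact isometry group, then use Lemma~\ref{b} at the maximum and minimum of $F(X)$ to conclude the corresponding flow orbits are geodesics realizing $d(x,\rho(x))$, forcing $\max F(X)=\min F(X)$. You are in fact slightly more careful than the paper on two points it glosses over --- that $c>0$ forces $X$ to be nowhere zero so Lemma~\ref{b} applies globally, and that $\delta$ must be shrunk so the flow geodesics stay within the injectivity radius and hence are minimizing.
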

\begin{proof}
Denote by $G$ the group of isometries of $(M,F)$ and by $\mathfrak g$ the Lie algebra of $G$.
Then $G$ is a compact Lie group (\cite{DH02}). For sufficiently
small $\delta>0$, any
Clifford-Wolf translation $\rho$ with $d(x,\rho(x))<\delta$ is contained in
a neighborhood $V\subset G$ of the identity map which can be generated by the exponential map. In particular,
there is a
Killing vector field $X$ with flow $\phi_t$ such that $\phi_1=\rho$. By the compactness,
we can assume that $\delta$ is smaller than the injective radius, i.e. the distant from $x$ to
$\phi_t(x)$ for each $t\in [0,1]$ is the length of the unique geodesic within the geodesic
ball $B_{\delta}(x)$.
Now we prove that  $X$ has a constant length $l$. Then it follows from \ref{b} immediately that
the distance from $x$ to $\phi_1(x)$ is the constant $tl$.
It is obvious that $F(X)$ is constant along each flow curve. By \ref{b},
$\nabla^X_X X=0$ along any flow curve where $F(X)$ takes
its minimum or maximum.
In either case, we have $d(x,\rho(x))=F(X)$. If $\rho$ is a Clifford-Wolf translation,
then the minimum and maximum of $F(X)$ will be equal. Hence $X$ is a Killing vector field of constant length.
\end{proof}

Below is another direct corollary of Lemma \ref{b}.
\begin{corollary}
If $X$ is a Killing vector field on $(M,F)$ of constant length, then the
covariant derivative $\nabla^X_X R^X=0$, where
$R^X:TM\rightarrow TM$ is the Riemann curvature of $F$.
\end{corollary}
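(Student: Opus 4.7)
My plan is to reduce the statement to an osculating-metric Killing computation via Shen's Lemma, after first using Lemma~\ref{b} to conclude that $X$ is geodesic.

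First, since $|X|^2$ is constant, Lemma~\ref{b} yields $\nabla^X_X X = 0$, so $X$ is a geodesic vector field; by the remark following Lemma~\ref{b}, its flow lines are also $g_X$-geodesics. Because the flow $\phi_t$ of $X$ consists of $F$-isometries preserving $X$, it preserves the osculating Riemannian metric $g_X$; in particular $X$ is also Killing for $g_X$. Hence $\phi_t$ preserves the Jacobi operator $\bar R^X$ of $g_X$, which by Shen's Lemma (the preceding lemma in Section~2) coincides with $R^X$, and therefore $\mathcal{L}_X R^X = 0$.

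Next, I would introduce the skew-symmetric operator $A V = \nabla^X_V X$. A Koszul-type computation for the Chern connection, using the Cartan-tensor null property $C_V(V,\cdot,\cdot) = 0$ together with $\nabla^X_X X = 0$, shows $\nabla^X_Y X = \nabla^{g_X}_Y X$ for every $Y$; hence $A$ coincides with its Riemannian counterpart $\bar A V = \nabla^{g_X}_V X$. Applying the standard Riemannian Killing identity $\nabla^2_{Y,Z} X = R_{g_X}(Y,X) Z$ in $(M, g_X)$, with $Z = X$ and $\nabla^{g_X}_X X = 0$, one obtains $A^2 = -R^X$, so $[A, R^X] = 0$ automatically.

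Finally, torsion-freeness of the Chern connection gives the general identity
\begin{equation*}
(\nabla^X_X T)(V) = (\mathcal{L}_X T)(V) + [A, T](V)
\end{equation*}
for any $(1,1)$-tensor $T$. Setting $T = R^X$, both summands on the right vanish by the preceding steps, yielding $\nabla^X_X R^X = 0$. I expect the main obstacle to be the middle step: verifying $A = \bar A$ via the Koszul-type formula, which requires careful use of both the vanishing of the Cartan tensor when one of its arguments equals $X$ and the geodesic condition $\nabla^X_X X = 0$.
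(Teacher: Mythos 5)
Your proposal is correct, and its backbone --- use Lemma~\ref{b} to see that $X$ is geodesic, pass to the osculating Riemannian metric $g_X$, observe that $X$ is Killing of constant length for $g_X$, and invoke Shen's lemma to identify $R^X$ with the Jacobi operator $\bar R^X$ of $g_X$ --- is exactly the reduction the paper makes. The difference lies in how the two arguments finish. The paper, having transferred everything to $(M,g_X)$, simply cites Berestovskii--Nikonorov for the Riemannian fact that a Killing field of constant length satisfies $\nabla_X R^X=0$; it also asserts rather briskly that the connection coefficients of $\nabla^X$ agree with those of $g_X$ along a geodesic field, which is the point you treat most carefully. You instead prove the Riemannian identity from scratch: the Koszul computation with the Cartan-tensor null property $C_X(\cdot,\cdot,X)=0$ and $\nabla^X_XX=0$ does give $\nabla^X_YX=\nabla^{g_X}_YX$, so $A=\bar A$; the Killing identity $\nabla^2_{Y,Z}X=R(Y,X)Z$ with $Z=X$ gives $A^2=-R^X$ and hence $[A,R^X]=0$; the flow-invariance of $g_X$ and of $X$ gives $\mathcal{L}_XR^X=0$; and the torsion-free decomposition $(\nabla^X_XT)(V)=(\mathcal{L}_XT)(V)+[A,T](V)$ assembles these into the conclusion. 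What your version buys is self-containedness and a precise justification of the step the paper glosses over; what the paper's version buys is brevity, at the cost of leaning on \cite{BN09} and on an unproved claim about the coincidence of connection coefficients. Both are valid.
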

\begin{proof}
 By \ref{b}, $X$ is a geodesic vector field. So neither the connection coefficients for
$D_X$ nor the curvature tensor $R^X$ is changed when they are
evaluated at $X$ and $F$ is replaced by the Riemannian metric $g_X$.
Hence $\nabla^X_X R^X$ remains the same when we change $F$ to $g_X$.
As $F(X)=g_X(X,X)$, the vector field $X$ has the same constant
length for $g_X$.
 From the proof of Lemma \ref{b}, it is easily seen that $X$ is also a Killing vector field for $g_X$.
On the  special coordinates chosen there, $F$ is independent of
$x^1$, so  $g_{ij}(X)=\frac{1}{2}[F^2]_{y^i y^j}$ is also
independent of $x^1$. From \cite{BN09}, we know that a Riemannian
Killing vector field of constant length satisfies $\tilde{\nabla}_X
R^X = 0$. Therefore the equality remains valid for $F$.
\end{proof}

In the Riemannian case, this corollary is a key step to study Clifford-Wolf homogeneous spaces.
But in the Finsler case, it is much less useful.

For a Randers space $(M,F)$, a Finsler Killing vector field $X$ for $F$ is in fact also
a Riemannian Killing vector field for $\alpha$. In fact we have
\begin{lemma}
A vector field $X$ on a Randers space $(M,F)$, $F=\alpha+\beta$ is a Killing vector field if and only if
$X$ is a Killing vector field for $\alpha$ and $L_X \beta =0$.
\end{lemma}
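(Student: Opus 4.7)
The plan is to exploit the parity structure of the Randers function: on each fiber $T_xM$ the Riemannian norm $\alpha(x,\cdot)$ is even (reversible) while the 1-form $\beta(x,\cdot)=b_i(x)y^i$ is linear, hence odd. As a result, $\alpha$ and $\beta$ are recovered from $F$ as its even and odd parts under the antipodal map $\sigma(x,y)=(x,-y)$ on $TM$:
\begin{equation*}
\alpha(x,y)=\tfrac{1}{2}\bigl(F(x,y)+F(x,-y)\bigr),\qquad \beta(x,y)=\tfrac{1}{2}\bigl(F(x,y)-F(x,-y)\bigr).
\end{equation*}

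The ``if'' direction is then essentially free. If $X$ is $\alpha$-Killing and $L_X\beta=0$, the flow $\varphi_t$ of $X$ preserves the Riemannian metric and the 1-form separately; passing to the induced action $d\varphi_t$ on $TM$, it preserves both $\alpha$ and $\beta$ as functions on $TM$, hence their sum $F$, so $X$ is $F$-Killing.

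For the ``only if'' direction, let $\varphi_t$ be the flow of $X$ and assume $F\circ d\varphi_t=F$ on $TM\setminus\{0\}$. Since $d\varphi_t$ is fiber-linear, it commutes with $\sigma$, so
\begin{equation*}
F\circ d\varphi_t\circ\sigma=F\circ\sigma\circ d\varphi_t=F\circ\sigma.
\end{equation*}
Adding and subtracting this identity with the invariance of $F$ gives $\alpha\circ d\varphi_t=\alpha$ and $\beta\circ d\varphi_t=\beta$. The first says that $d\varphi_t$ preserves the Riemannian norm, i.e., $\varphi_t$ is an $\alpha$-isometry, so $X$ is a Killing field for $\alpha$. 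The second, translated back to $M$ via the identification of the fiber-linear function $b_i(x)y^i$ on $TM$ with the 1-form $b_i\,dx^i$, is exactly $\varphi_t^{\,*}\beta=\beta$, equivalently $L_X\beta=0$.

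I do not expect any serious obstacle; the argument rests entirely on the reversibility dichotomy between $\alpha$ and $\beta$. The one routine check is the equivalence between ``the fiber-linear function $b_i(x)y^i$ on $TM$ is preserved by $d\varphi_t$'' and ``$L_X\beta=0$ as 1-forms on $M$,'' which follows from a short coordinate computation with the complete lift $\hat X=X^i\partial_{x^i}+y^j(\partial_{x^j}X^i)\partial_{y^i}$, matching $\hat X(b_i y^i)$ with the classical formula for the Lie derivative of a 1-form.
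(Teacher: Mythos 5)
Your proposal is correct and follows essentially the same route as the paper: the authors also separate $\alpha$ and $\beta$ as the even and odd parts of $F$ by comparing its values at $(x,y)$ and $(x,-y)$ under the flow. Your write-up just makes the parity argument and the commutation of $d\varphi_t$ with the antipodal map explicit.
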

\begin{proof}
If the vector field $X$ is a Killing vector field for $(M,F)$,
then its flow $\phi_t$ fixes $F$, i.e.
\begin{equation}
\phi_t^* F=\phi_t^* \alpha +\phi_t^* \beta=\alpha+\beta=F,
\end{equation}
for each $t$. For any $x$ and $y\in T_x(M)$, $y\ne 0$, considering the values of $F$ on $(x,y)$ and $(x,-y)$, we get that
 $\phi_t^* \alpha=\alpha$ for each $t$,
i.e., $X$ is Killing vector field for $\alpha$, and $\phi_t^*\beta=\beta$ for each $t$, i.e.,
$L_X \beta=0$. The other direction is obvious.
\end{proof}

When $X$ is a Killing vector field for $\alpha$, the condition $L_X \beta =0$ can be equivalently
written as $L_X V=[X,V]=0$, where $V$ is the dual of $\beta$ with respect to the Riemannian metric $\alpha$.

\section{Homogeneous Randers spaces and Clifford-Wolf translations}

A Randers space $(M,F)$, with $F=\alpha+\beta$, is homogeneous if
its full isometry group $G=\mbox{I}(M,F)$ acts transitively on $M$.
The Lie algebra of $G$ is denoted by $\mathfrak{g}$. The homogeneous
space $M$ can be identified with $G/H$, where $H$ is the isotropy
subgroup of $G$ at a point $x$ of $M$. Recall that $H$ is necessary
compact. The Randers metric $F$ is determined by the data at the
tangent space $T_x\cong \mathfrak{g}/\mathfrak{h}=\mathfrak{m}$,
i.e. $\alpha$ is determined by an inner product $<\cdot,\cdot>$ on
$\mathfrak{m}$, and the dual $V$ of $\beta$ is a vector of
$\mathfrak{m}$, both invariant under the Ad-action of $H$.

The presentations of homogeneous spaces for $(M,F)$ may not be
unique. The full isometry group can be changed to any closed
subgroup which acts transitively on the manifold. The problem of
finding Clifford-Wolf translations can be discussed in two different
style. We may try to find all Clifford-Wolf translations for
$(M,F)$, which means we may need to deal with different form of
homogeneous spaces for the same manifold, or take $G$ to be the
biggest one, the isometry group itself. Our discussion in this work
will take another style. Restrict our discussion to a possibly
smaller Lie group $G$ and its Lie algebra $\mathfrak{g}$, it is
relatively easy to start and already able to tell us something new
about Clifford-Wolf translations in Finsler geometry.

If we assume further that $M$ is compact, then $G$ is also compact.
There is an orthogonal decomposition for
the Lie algebra $\mathfrak{g}=\mathfrak{h}+\mathfrak{m}$ with respect to  the Killing form of $\mathfrak{g}$.
This is true since  $H\cap C(G)$ contains only the identity map of $M$. Though the Killing
form may not be definite, its vanishing space $C(\mathfrak{g})$ is totally contained in
$\mathfrak{m}$. For any $X\in \mathfrak{g}$, its decomposition will be denoted $X=X_{\mathfrak h }+ X_{\mathfrak m}$.

The study of Clifford-Wolf translations on a homogeneous Randers space is very interesting.
From  Section 3, we have seen that, the problem of finding Clifford-Wolf translations, at least
those which are close to the identity map, can be reduced to the problem of finding Killing
fields of constant length.

We can calculate the differential of the length function $F(X)$ of a Killing vector field $X$ when it
is viewed as an element of $\mathfrak{g}$. The vector of $X$ at each
point can be pulled back to the chosen point, by some $g\in G$, and can be identified with
$(\mbox{Ad}_g X)_m$ in $\mathfrak{m}$. The evaluation of $F$ at that point is then
\begin{equation}
F=<(\mbox{Ad}_g X)_m, (\mbox{Ad}_g X)_m>^{1/2}+<(\mbox{Ad}_g X)_m, V>.
\end{equation}
for a family $g_t=\exp(tY)\cdot g$, with $Y\in \mathfrak{g}$,
the derivative for $t$ at $t=0$ is
\begin{equation}
\frac{d}{dt}F|_{t=0}=\frac{<[Y,\mbox{Ad}_g X]_m, (\mbox{Ad}_g X)_m>}{<(\mbox{Ad}_g X)_m,(\mbox{Ad}_g X)_m>^{1/2}}+<[Y,\mbox{Ad}_g X]_m,V>.
\end{equation}
From this we get    the following theorem.
\begin{theorem}\label{main theorem}
Let $M=G/H$ be a connected homogeneous Randers space with  $G=\mbox{I}(M,F)$ and $F=\alpha+\beta$.
Then any Killing vector field $X$ of constant length satisfies
\begin{equation}
\frac{<[Y,\mbox{Ad}_g X]_m, (\mbox{Ad}_g X)_m>}{<(\mbox{Ad}_g X)_m,(\mbox{Ad}_g X)_m>^{1/2}}+<[Y,\mbox{Ad}_g X]_m,V>=0, \label{Clifford-Wolf-hrs-1}
\end{equation}
for all $Y\in \mathfrak{g}$, and $g\in G$.
If $H$ is connected, then (\ref{Clifford-Wolf-hrs-1}) is also a sufficient condition for a Killing vector field $X$ to have constant length.
\end{theorem}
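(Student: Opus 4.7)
Equation (\ref{Clifford-Wolf-hrs-1}) is exactly the first-variation expression computed in the display immediately preceding the theorem, so the statement reduces to the equivalence ``$\tilde{X}$ has constant $F$-length if and only if every directional derivative of the length function $F\circ\tilde{X}$ vanishes.'' My plan is to introduce the auxiliary function $\Phi:G\to\mathbb{R}$ defined by $\Phi(g):=F(\tilde{X}(g\cdot x_0))$, where $\tilde{X}$ denotes the Killing vector field generated by $X\in\mathfrak{g}$ and $x_0=eH$. By the $G$-invariance of $F$ and the identification $T_{x_0}M\cong\mathfrak{m}$, $\Phi$ agrees with the formula for $F$ displayed above, and the $\mbox{Ad}(H)$-invariance of $\langle\cdot,\cdot\rangle$ and $V$ ensures that $\Phi$ descends to a well-defined function on $G/H=M$ equal to $F\circ\tilde{X}$.

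For the necessity direction, if $\tilde{X}$ has constant length then $\Phi$ is constant on $G$, so $\frac{d}{dt}\Phi(\exp(tY)g)|_{t=0}=0$ for every $Y\in\mathfrak{g}$ and $g\in G$. This derivative is, by the computation in the display above, precisely the left-hand side of (\ref{Clifford-Wolf-hrs-1}), and the equation follows.

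For the sufficiency direction, suppose (\ref{Clifford-Wolf-hrs-1}) holds for all $Y\in\mathfrak{g}$ and $g\in G$. Then $d\Phi_g$ annihilates every vector of the form $\frac{d}{dt}|_{t=0}\exp(tY)g$; these are the values at $g$ of the right-invariant extensions of elements of $\mathfrak{g}$, and they sweep out all of $T_gG$. Hence $d\Phi\equiv 0$ on $G$, and $\Phi$ is locally constant. The connectedness hypothesis on $H$ enters at this step: because $M=G/H$ is connected, the identity component $G_0$ acts transitively on $M$, so $G=G_0\cdot H$; connectedness of $H$ forces $H\subset G_0$, whence $G=G_0$ is itself connected. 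A locally constant function on a connected manifold is constant, so $F\circ\tilde{X}$ is constant on $M$.

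The proof is mostly bookkeeping; the analytic content lies entirely in the derivative computation already carried out. The two points requiring minor care are the well-definedness of $\Phi$ on the quotient $G/H$ (routine from $\mbox{Ad}(H)$-invariance) and the passage from pointwise vanishing of $d\Phi$ to global constancy, which is precisely where the hypothesis that $H$ be connected (and thus $G$ be connected) is used.
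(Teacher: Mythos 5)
Your proposal is correct and follows essentially the same route as the paper, which states the theorem as an immediate consequence of the first-variation formula computed just before it and supplies no further argument. You have merely filled in the bookkeeping the paper omits (well-definedness of the length function on $G/H$, spanning of $T_gG$ by the curves $\exp(tY)g$, and the deduction that connectedness of $H$ together with connectedness of $M$ forces $G$ to be connected so that the locally constant length function is globally constant), all of which is accurate.
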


If we choose the family $g_t=g \cdot exp(tY)$ to calculate the derivative, then (\ref{Clifford-Wolf-hrs-1})
is changed to
\begin{equation}
\frac{<(\mbox{Ad}_g [Y,X])_m, (\mbox{Ad}_g X)_m>}{<(\mbox{Ad}_g X)_m,(\mbox{Ad}_g X)_m>^{1/2}}+<(\mbox{Ad}_g [Y,X])_m,V>=0,\label{Clifford-Wolf-hrs-2}
\end{equation}
for all $Y\in \mathfrak{g}$ and $g\in G$. In fact for
(\ref{Clifford-Wolf-hrs-2}) to be sufficient, we only need to take
all $Y\in\mathfrak{m}$ and $g\in G$. And it is similar for
(\ref{Clifford-Wolf-hrs-1}).

This theorem  is very useful for finding Killing vector fields
of constant length and Clifford-Wolf
translations of homogeneous Randers spaces.

For example, we have the following theorem.

\begin{theorem}\label{easy theorem}
Suppose $G$ is a connected compact Lie group endowed with a Randers metric $F=\alpha+\beta$, such that
$\alpha$ is bi-invariant. Denote  the dual of $\beta$ with respect to  $\alpha$ by $V$.
Then the following four conditions are equivalent:
\begin{description}
\item{(1)}\quad  The vector $X\in \mathfrak{g}$ generates  a Killing vector field of constant length.
\item{(2)}\quad The ideal of ${\mathfrak g}$ generated by $[\mathfrak{g},X]$ and the ideal generated by $V$ are orthogonal
to each other with respect to $\alpha$.
\item{(3)}\quad The ideal generated by $[\mathfrak{g},V]$ and the ideal generated by $X$ are orthogonal
to each other with respect to $\alpha$.
\item{(4)}\quad With respect to the metric $\alpha$, the inner product between any vectors of the $\mbox{Ad}_G$-orbits $\mathcal{O}_X$ and
$\mathcal{O}_V$ is a constant.
\end{description}
\end{theorem}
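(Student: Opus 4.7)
The plan is to apply Theorem~\ref{main theorem} in the trivial-isotropy presentation $G=G/\{e\}$, so that $\mathfrak{m}=\mathfrak{g}$ and the inner product on $\mathfrak{m}$ is just the bi-invariant one coming from $\alpha$. The characterisation (\ref{Clifford-Wolf-hrs-1}) of a constant-length Killing field then reads
\begin{equation*}
\frac{\langle [Y,\mbox{Ad}_g X],\mbox{Ad}_g X\rangle}{\langle \mbox{Ad}_g X,\mbox{Ad}_g X\rangle^{1/2}}+\langle [Y,\mbox{Ad}_g X],V\rangle=0,\qquad Y\in\mathfrak{g},\ g\in G.
\end{equation*}
Since $\alpha$ is bi-invariant, $\mbox{ad}_Y$ is $\alpha$-skew, so $\langle [Y,Z],Z\rangle=0$ and the first summand vanishes identically. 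Thus (1) is equivalent to the single condition
\begin{equation*}
(\ast)\qquad \langle [Y,\mbox{Ad}_g X],V\rangle=0\quad\text{for all }Y\in\mathfrak{g},\ g\in G,
\end{equation*}
with the sufficient direction legitimate because $H=\{e\}$ is connected.

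Next I would recognise $(\ast)$ in each of (2) and (3). Rewriting $[Y,\mbox{Ad}_g X]=\mbox{Ad}_g[\mbox{Ad}_{g^{-1}}Y,X]$ and using $\mbox{Ad}_G$-invariance of $\alpha$, $(\ast)$ becomes $\langle [\mathfrak{g},X],\mbox{Ad}_G V\rangle=0$. Because $G$ is connected, the $\mbox{Ad}_G$-invariant linear span of $V$ is precisely the smallest ideal $I_V$ containing $V$, and bi-invariance of $\alpha$ forces $I_V^{\perp}$ to be an ideal as well. Hence $[\mathfrak{g},X]\subset I_V^{\perp}$ is the same as saying that the ideal generated by $[\mathfrak{g},X]$ lies in $I_V^{\perp}$, which is exactly condition~(2). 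Moving the commutator in $(\ast)$ onto the other factor by $\alpha$-skewness of $\mbox{ad}_Y$ yields $\langle\mbox{Ad}_G X,[\mathfrak{g},V]\rangle=0$, and the mirror ideal-complement argument produces condition~(3).

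For~(4), I would observe that $(\ast)$ states precisely that every directional derivative, at every point, of the smooth function $g\mapsto\langle\mbox{Ad}_g X,V\rangle$ vanishes; by connectedness of $G$ this function is then the constant $\langle X,V\rangle$. Bi-invariance gives the identity $\langle\mbox{Ad}_g X,\mbox{Ad}_h V\rangle=\langle\mbox{Ad}_{h^{-1}g}X,V\rangle$, so constancy in the single variable translates into constancy of the inner product on the product of orbits $\mathcal{O}_X\times\mathcal{O}_V$, yielding~(4). The converse is the same differentiation read backwards.

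The step I expect to need the most care is the ``upgrade'' from orthogonality against $[\mathfrak{g},X]$ (respectively $[\mathfrak{g},V]$) to orthogonality against the entire ideal it generates. Both rest on the standard but crucial fact that in a compact Lie algebra the $\alpha$-orthogonal complement of an ideal is again an ideal, which follows at once from bi-invariance; once this is in hand, every remaining manoeuvre is a formal manipulation using only bi-invariance of $\alpha$ and the connectedness of $G$.
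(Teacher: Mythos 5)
Your proposal is correct. It starts exactly where the paper does: specialize Theorem~\ref{main theorem} to the presentation with trivial (hence connected) isotropy, note that bi-invariance kills the $\alpha$-term, and reduce condition (1) to the single identity $\langle [Y,\mbox{Ad}_g X],V\rangle=0$ for all $Y\in\mathfrak{g}$, $g\in G$. Where you diverge is in how the remaining conditions are connected. The paper proves the cyclic chain $(1)\Rightarrow(2)\Rightarrow(3)\Rightarrow(4)\Rightarrow(1)$: for $(1)\Rightarrow(2)$ it writes group elements as products of exponentials and differentiates $\langle \mbox{Ad}_g[Y,X],\mbox{Ad}_{g'}V\rangle=0$ in all parameters to produce the iterated brackets that span the two ideals, and for $(3)\Rightarrow(4)$ it expands $\exp(\mbox{ad}_Y)X$ and $\exp(\mbox{ad}_Z)V$ directly. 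You instead show each of (2), (3), (4) is equivalent to the single condition $(\ast)$, leaning on two structural facts: that for connected $G$ the $\mbox{Ad}_G$-invariant span of a vector is the smallest ideal containing it, and that the $\alpha$-orthogonal complement of an ideal is again an ideal. This buys you genuine equivalences at each step (rather than one-way implications closed into a cycle) and replaces the iterated-differentiation computation with a cleaner algebraic observation; the paper's version is more computational but more self-contained, not needing the ideal-complement fact. Both arguments are sound, and your identification of the ideal-complement step as the one needing care is exactly right --- it is the point where ``orthogonal to $[\mathfrak{g},X]$'' upgrades to ``orthogonal to the ideal generated by $[\mathfrak{g},X]$''.
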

\begin{proof} We will prove this theorem by showing that (i) implies (i+1), for $i=1,2,3$ and that (4) implies (1).

That (1) implies  (2).\quad
If $M$ is a connected compact Lie group and $\alpha$ is bi-invariant, then any $X\in \mathfrak{g}$
has  constant length with respect  to $\alpha$. The equation
(\ref{Clifford-Wolf-hrs-2}) can then be simplified as
\begin{equation}
<\mbox{Ad}_g [Y, X],V>=0,
\end{equation}
for all $Y\in \mathfrak{g}$ and $g\in G$. So for any $g=\exp t_1 Z_1 \exp t_2 Z_2\cdots \exp t_k Z_k$
and $g'=\exp t'_1 Z'_1 \exp t'_2 Z'_2 \cdots \exp t'_l Z'_l$,
\begin{equation}
<\mbox{Ad}_g [Y,X], \mbox{Ad}_{g'} V>=0.
\end{equation}
Differentiating the above equation with respect to all $t_i$ and $t'_j$ and evaluating at the $0$'s, we get
\begin{equation}
<[Z_1,\ldots,[Z_k,[Y,X]]],[Z'_1,\ldots,[Z'_l,V]]>=0.
\end{equation}
This proves that (1) implies (2).

That (2) implies (3).\quad This is obvious since $<[Y,X],V>=-<X,[Y,V]>$. This also proves that  (3) implies (2).

That (3) implies (4).\quad  Suppose  $X'=\exp(ad_Y) X\in
\mathcal{O}_X$ and $V'=\exp(ad_Z) V\in \mathcal{O}_V$. Then using
(2), (3) and a direct calculation one easily shows that
$<X',V'>=<X,V>$.

That (4) implies (1).\quad If $<\mathcal{O}_X,\mathcal{O}_V>$ is a constant, then $\mathcal{O}$ is orthogonal
to tangent space of $\mathcal{O}_V$ at $V$, i.e., $[\mathfrak{g},V]$. So for any
$Y\in \mathfrak{g}$ and $g\in G$,
\begin{equation}
<[Y,\mbox{Ad}_g X],V>=-<\mbox{Ad}_g X,[Y,V]>=0.
\end{equation}
Using Theorem \ref{main theorem}, we finish the proof.
\end{proof}

Theorem \ref{easy theorem} explains the most obvious technique to construct a homogeneous
Randers metric on Lie groups and to find Clifford-Wolf translations. Take $G=G_1\times G_2$ with a
bi-invariant Riemannian metric $\alpha$, so that
their Lie algebras $\mathfrak{g}_1$ and $\mathfrak{g}_2$ have positive dimensions and
orthogonal. Then take $X$ from one factor and $V$ from the other. The Killing vector field
generated by $X$ have constant length for both $F$ and $\alpha$.
Then $X$ generates a Clifford-Wolf translation which is represented as left multiplication by group
elements.

In fact we can also mix it with right multiplications. Take $X_1$ from $\mathfrak{g}_1$
and $X_2$ from $\mathfrak{g}_2$, such that $[X_2,V]=0$. The composition of the left
multiplication generated by $X_1$ and right multiplication generated by $X_2$ generates
Clifford-Wolf translations for $F$. The corresponding Killing vector field is not an element of $\mathfrak{g}$,
provided $X_2$ is not in the center. Theorem \ref{main theorem} can be used to generalize
Theorem \ref{easy theorem} when the Killing vector field $X$ is not in $\mathfrak{g}$.

%Besides those from $\mathfrak{g}$, there may be other Killing vector fields. If $G$ is a
%connected compact Lie group with a trivial center, then the identity
%component of the isometry group is a product of $G\times G'$, in which $G'$ is the
%identity component of the centralizer of $V$. It should also keep $\alpha$ invariant.
%When we choose bi-equivariant $\alpha$, this requirement is naturally satisfied.
%The action of any $(g_1,g_2)\in G\times G'$ on $G$ is canonical,
%\begin{equation}
%(g_1,g_2)\cdot g=g_1 g g_2^{-1}.
%\end{equation}

It will be more interesting to see the case
that $\alpha$ is not bi-invariant, and the Killing vector field is not of
a constant length with respect to $\alpha$, but has a constant length with respect to $F$.
For  convenience, in this case
the homogeneous Randers metric will usually be presented as
\begin{equation}
F(X)=\alpha(X)+<X,V>_{bi},
\end{equation}
where the $\beta$-term is expressed by the inner product $<\cdot,\cdot>_{bi}$
for a fixed bi-invariant metric, which is not the inner product of $\alpha$ (note that $\alpha$  is assumed to be not bi-invariant).

Let $G=G_1\times G_2$  with $G_1=SU(2)$
and $G_2=S^1$. Fix a bi-invariant linear metric on $G$ such  that $\mathfrak{g}_1$ and
$\mathfrak{g}_2$ are orthogonal. Then
$\mathfrak{g}_1$ can be orthogonally decomposed as
$\mathbb{R}U_1\oplus \mathbb{R}U_1^{\perp}$, where $U_1$ is a unit vector on ${\mathfrak g}_1$.
Let $U_2$ be a unit vector in $\mathfrak{g}_2$. Define a left invariant Riemannian metric $\alpha$  on $G$  such that
$\alpha|_{{\mathbb R} U_1^\perp}$ is proportional to
  $\alpha'$.
Choose $V=aU_1+bU_2$, i.e.
\begin{equation}
\beta(X)=<X,V>_{bi}=ax_1+bx_2,
\end{equation}
for $X=x_1 U_1 +X' +x_2 U_2$, $X'\in\mathbb{R}U_1^\perp$.

If the vector $X=rU_1+sU_3$ with $r$ and $s\neq 0$
generates a Killing vector field with  constant length for $F$,
then its orbit $\mathcal{O}_X$, which is a radius $r$ sphere
for the invariant metric centered at $sU_3$, has
the constant length $l$ for $F$. A sufficient condition for this is
\begin{equation}
\alpha^2(X')=(ax_1+bs-l)^2,\label{a}
\end{equation}
for all $X'=x_1 U_1 +X'+s U_2\in\mathcal{O}_X$ with
$x_1^2+<X',X'>_{bi}=r^2$. As the right side only depends on
$<X',X'>_{bi}^{1/2}$, we get $<X',U_2>=0$ and $<X',U_1>=0$ for
$\alpha$. So
\begin{eqnarray}
\alpha(X')^2 &=& a_{11}x_1^2+a_{22}<X',X'>_{bi}+a_{33}s^2+2a_{13}x_1 s\nonumber\\
             &=& (a_{11}-a_{22})x_1^2+2a_{13}s x_1 + (a_{33}s^2+ a_{22}r^2)
\end{eqnarray}
If it equals
\begin{equation}
(<X',V>_{bi}-l)^2=(ax_1+bs-l)^2,
\end{equation}
then we get a system of  linear equations for $a_{11}$, $a_{22}$, $a_{33}$ and $a_{13}$.
The space of the solutions of the above equations is of   dimension one.
We now summarize the above discussion  as a proposition which is useful in the discussion for
other compact Lie groups.
\begin{proposition}
Let $X$ and $V$ belong to the Lie algebra
$\mathfrak{g}=su(2)\oplus\mathbb{R}$ for the Lie group $SU(2)\times
S^1$. Assume $X$ lies neither   in  $su(2)$ nor in $\mathbb{R}$.
Then  for any chosen $l>0$, there is a one-dimensional family of
Randers metrics under which $X$ generates a Killing vector field of
constant length $l$.
\end{proposition}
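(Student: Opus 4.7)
The plan is to implement the construction developed in the paragraph immediately preceding the proposition, viewing it as a linear algebra problem subject to positivity constraints.

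First I would adapt the frame to the hypotheses: since $X \notin \mathbb R$, its projection onto $\mathfrak g_1 = \mathfrak{su}(2)$ is nonzero, and I take its unit normalization (with respect to the fixed bi-invariant metric) as $U_1$. Then $X = rU_1 + sU_2$ with $r \neq 0$, and because $X \notin \mathfrak{su}(2)$ also $s \neq 0$. The $\mbox{Ad}_G$-orbit $\mathcal O_X$ is then a round $2$-sphere of radius $r$ sitting in the affine plane $\mathfrak g_1 + sU_2$, since $G_2 = S^1$ acts trivially on $\mathfrak g$ and $SU(2)$ acts on $\mathfrak{su}(2)$ as $SO(3)$. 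For a candidate Randers metric $F = \alpha + \beta$ with $V = aU_1 + bU_2$ dual to $\beta$, the constant-length condition $F|_{\mathcal O_X} \equiv l$ reads $\alpha(Y)^2 = (l - ax_1 - bs)^2$ for every $Y = x_1 U_1 + X'' + sU_2 \in \mathcal O_X$. Since the right-hand side depends only on $x_1$ and not on the direction of $X''$, the metric $\alpha$ is forced to have the block structure already used: $\mathbb R U_1 + \mathbb R U_2$ is $\alpha$-orthogonal to $\mathbb R U_1^\perp$, and $\alpha|_{\mathbb R U_1^\perp}$ is a positive multiple of the bi-invariant reference.

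With $\alpha$ described by the coefficient $a_{22}$ on $\mathbb R U_1^\perp$ and the matrix $(a_{11}, a_{13}, a_{33})$ on $\mathbb R U_1 + \mathbb R U_2$, the constant-length condition becomes the polynomial identity
\begin{equation*}
(a_{11}-a_{22})x_1^2 + 2a_{13}s\,x_1 + (a_{33}s^2 + a_{22}r^2) \;=\; (ax_1 + bs - l)^2
\end{equation*}
in $x_1$, equivalent to the three linear equations $a_{11}-a_{22}=a^2$, $a_{13}s = a(bs-l)$, and $a_{33}s^2 + a_{22}r^2 = (bs-l)^2$ in the four unknowns $(a_{11}, a_{22}, a_{33}, a_{13})$. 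Since $s \neq 0$ the coefficient matrix has rank three, so the solution set is a one-parameter affine line, parametrized (for instance) by $a_{22}$.

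The main obstacle, to which the proof must devote attention, is verifying that this affine line meets the open region where $\alpha$ is positive definite and $|V|_\alpha < 1$, so that the output is a genuine Randers metric. Solving in terms of $a_{22}$ gives $a_{11} = a_{22}+a^2$, $a_{13} = a(bs-l)/s$, and $a_{33} = \bigl((bs-l)^2 - a_{22}r^2\bigr)/s^2$. A short computation shows that the $2\times 2$ block determinant equals $a_{22}\bigl((bs-l)^2 - a_{11}r^2\bigr)/s^2$, so for $a_{22}$ in a suitable non-empty open interval (and for $|a|,|b|$ chosen small enough relative to $l$) this determinant is positive along with each $a_{ii}$, and $|V|_\alpha < 1$ as well. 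This produces the asserted one-parameter family of honest Randers metrics realizing the required constant length.
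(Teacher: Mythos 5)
Your proposal follows essentially the same route as the paper: reduce the constant-length condition on the $\mathrm{Ad}_G$-orbit (a round $2$-sphere at height $s$) to the identity $(a_{11}-a_{22})x_1^2+2a_{13}sx_1+(a_{33}s^2+a_{22}r^2)=(ax_1+bs-l)^2$, i.e.\ three linear equations in the four unknowns $a_{11},a_{22},a_{33},a_{13}$, whose solution set is a one-dimensional affine line. Your additional verification that this line meets the open set where $\alpha$ is positive definite and $\|\beta\|_\alpha<1$ (and implicitly $l-ax_1-bs>0$) is a welcome refinement that the paper leaves unaddressed.
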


The above construction still works even if $SU(2)$ is changed to a larger connected
compact Lie group $G_1$.
The unit sphere for a bi-invariant metric of $\mathfrak{g}_1$,
with a constant non-vanishing
$g_2$ component will have a constant length for some suitably chosen $F$.
Since the sphere contains more than one orbit, for any unit vector  $U_1$,
 the orbit of $X=r U_1+s U_3$, with $r$ and $s$ non-zero, contains more than one point.
This means that $\mathcal{O}_X$ is not of constant
length for $\alpha$.

In this construction,
we have only used the product structure in the Lie algebra.
Considering the quotient group of  $G$  by a finite subgroup in its center,  we have
the following
\begin{corollary}
If the connected compact Lie group $G$ has a center with positive
dimension, then there is a left invariant Randers metric
$F=\alpha+\beta$ on $G$, such that there are CLIFFORD-WOLF
translations generated by Killing fields in $\mathfrak{g}$, which
are not CLIFFORD-WOLF translations for $\alpha$.
\end{corollary}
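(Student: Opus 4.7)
The plan is to adapt the construction preceding the proposition, which depended only on the Lie algebra decomposition $\mathfrak{g}=\mathfrak{g}_1\oplus\mathfrak{z}$ and not on any global product structure of $G$. Fix a bi-invariant inner product $\langle\cdot,\cdot\rangle_{bi}$ on $\mathfrak{g}$, making $\mathfrak{z}$ (the center) and $\mathfrak{g}_1=[\mathfrak{g},\mathfrak{g}]$ orthogonal. By hypothesis $\mathfrak{z}\neq 0$, and we may assume $\mathfrak{g}_1\neq 0$, for otherwise $G$ would be a torus and every left-invariant metric would be bi-invariant, making the conclusion vacuous. Pick unit vectors $U_1\in\mathfrak{g}_1$ and $U_2\in\mathfrak{z}$ together with nonzero scalars $r,s,a,b$, and set $X=rU_1+sU_2$, $V=aU_1+bU_2$, with $\beta(Y)=\langle Y,V\rangle_{bi}$ extended by left-invariance.

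Next, I would build the left-invariant Riemannian metric $\alpha$ exactly as in the proposition: on the orthogonal splitting $\mathfrak{g}=\mathbb{R}U_1\oplus(U_1^{\perp}\cap\mathfrak{g}_1)\oplus\mathfrak{z}$, set
\begin{equation*}
\alpha^2(x_1U_1+X''+tU_2)=a_{11}x_1^2+a_{22}\langle X'',X''\rangle_{bi}+a_{33}t^2+2a_{13}x_1 t,
\end{equation*}
with $a_{ij}$ solving the same three linear equations as in the proposition so that $\alpha(rX'+sU_2)+\langle rX'+sU_2,V\rangle_{bi}=l$ for every unit vector $X'\in\mathfrak{g}_1$; extend $\alpha$ by the bi-invariant form on all remaining directions. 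Since $U_2$ is central, $\mathrm{Ad}_gX=r\,\mathrm{Ad}_gU_1+sU_2$ always lies on the affine sphere on which $F=\alpha+\beta$ is constant, so the Killing field $X$ has constant length $l$ for $F$. Applying the result of Section 3 on constant-length Killing fields (valid because $(G,F)$ is compact and hence has positive injectivity radius), the flow of $X$ is a Clifford-Wolf translation for $F$ for all sufficiently small time.

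The final step is to verify that $X$ does not have constant length for $\alpha$. The explicit expansion
\begin{equation*}
\alpha^2(\mathrm{Ad}_gX)=(a_{11}-a_{22})r^2(x_1')^2+2a_{13}rs\,x_1'+(a_{33}s^2+a_{22}r^2),
\end{equation*}
with $x_1'=\langle\mathrm{Ad}_gU_1,U_1\rangle_{bi}$, has leading coefficient $a_{11}-a_{22}=a^2\neq 0$ by the linear system; and since $\mathfrak{g}_1$ is semisimple, the adjoint orbit of the nonzero vector $U_1$ has positive dimension, so $x_1'$ is nonconstant on $G$. Hence $\alpha(\mathrm{Ad}_gX)$ genuinely varies with $g$, and no flow of $X$ is a Clifford-Wolf translation for $\alpha$. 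The main technicality to check is that the quadratic form $\alpha$ is actually positive definite and that $|V|_\alpha<1$; since the linear system has a one-parameter family of solutions, this is arranged by choosing $l$ with $|l-sb|$ sufficiently large (securing positivity of the $\{U_1,U_2,X''\}$-block) and then scaling $V$ if necessary. Moreover, the construction is insensitive to taking a finite central quotient of $G$, since everything is built from left-invariant data on $\mathfrak{g}$, which descends automatically.
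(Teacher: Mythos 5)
Your construction is correct and follows essentially the same route as the paper: decompose $\mathfrak{g}$ into its semisimple part and its center, take $X$ and $V$ with nonzero components in each summand, and solve the linear system for the coefficients of $\alpha$ so that $F=\alpha+\beta$ is constant on the sphere containing $\mathcal{O}_X$ while $\alpha$ is not, the whole argument living at the Lie algebra level and hence descending to central quotients. You are in fact somewhat more careful than the paper about positive definiteness of $\alpha$, the bound $\|\beta\|_{\alpha}<1$, and the degenerate case where $G$ is a torus.
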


\section{Clifford-Wolf translations for left invariant Randers metrics on $SU(n)$}

To find examples of Clifford-Wolf translations generated by
Killing vector fields of constant length
with respect to a  left invariant Randers metric
on a connected compact simple or semi-simple Lie group, it is natural to start
with $SU(2)$.
But we can easily see that on $SU(2)$, a left invariant Randers netric has no nontrivial Clifford-Wolf translations  unless it is a  Riemannian metric.
\begin{proposition}\label{prop} Any left invariant Randers metric on $SU(2)$
which has a nonzero Killing vector field of  constant length in
$su(2)$ is Riemannian and bi-invariant.
\end{proposition}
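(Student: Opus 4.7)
The plan is to exploit the special feature of $SU(2)$ that the adjoint orbit of any nonzero element of its Lie algebra is a $2$-sphere. Let $X \in \mathfrak{g} = su(2)$ be nonzero and generate a Killing vector field of constant length for $F = \alpha + \beta$. Because $F$ is left-invariant, the $F$-length at $g$ of the Killing vector field generated by $X$ equals $F(\mbox{Ad}_{g^{-1}} X)$; hence the constant length condition is equivalent to $F(\xi) = l$ for every $\xi$ in the adjoint orbit $\mathcal{O}_X$, where $l > 0$ because $X \neq 0$ forces the Killing vector field to be nowhere vanishing. Fixing a bi-invariant inner product $<\cdot,\cdot>_{bi}$ on $\mathfrak{g}$, the orbit $\mathcal{O}_X$ is precisely the sphere of radius $r = |X|_{bi}$ in $\mathfrak{g} \cong \mathbb{R}^3$.

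Write the defining inner product of $\alpha$ on $\mathfrak{g}$ as $A(\cdot,\cdot)$ and let $V \in \mathfrak{g}$ be the dual of $\beta$ with respect to $\alpha$, so that $F(\xi) = \sqrt{A(\xi,\xi)} + A(\xi, V)$. Isolating the radical and squaring turns the constant length condition into the polynomial identity
\begin{equation*}
A(\xi,\xi) - A(\xi, V)^2 + 2l\, A(\xi, V) - l^2 = 0 \quad \text{on } \mathcal{O}_X.
\end{equation*}
The left side has degree at most $2$ in the coordinates of $\xi$, and the ideal of polynomials in $\mathbb{R}[\xi_1,\xi_2,\xi_3]$ vanishing on $\mathcal{O}_X$ is generated by $|\xi|_{bi}^2 - r^2$; therefore there is a constant $c$ such that
\begin{equation*}
A(\xi,\xi) - A(\xi, V)^2 + 2l\, A(\xi, V) - l^2 = c\bigl(|\xi|_{bi}^2 - r^2\bigr)
\end{equation*}
identically in $\xi \in \mathfrak{g}$.

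To finish, I will choose an orthonormal basis $\{e_1, e_2, e_3\}$ of $\mathfrak{g}$ for $<\cdot,\cdot>_{bi}$ that simultaneously diagonalizes $A$, say with eigenvalues $\lambda_i > 0$, and write $V = \sum V_i e_i$ and $b_i = \lambda_i V_i$. Matching coefficients in the polynomial identity yields $\lambda_i - b_i^2 = c$ from the $\xi_i^2$ terms, $b_i b_j = 0$ for $i \neq j$ from the cross terms, $l\, b_i = 0$ from the linear terms, and $l^2 = cr^2$ from the constant term. Because $l > 0$, the linear relations force every $b_i$, hence every $V_i$, to vanish; so $V = 0$ and $\beta = 0$. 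The quadratic relations then collapse to $\lambda_i = c$ for all $i$, which forces $A$ to be a scalar multiple of $<\cdot,\cdot>_{bi}$, and hence $\alpha$ to be bi-invariant. I anticipate no serious obstacle: the main insight is to recognize that the constant length condition on $\mathcal{O}_X$ can be squared to a polynomial identity on a quadric in $\mathbb{R}^3$, after which the argument is routine coefficient matching. The cleanliness of this reduction is specific to $SU(2)$, where $\mathcal{O}_X$ is cut out by a single quadric.
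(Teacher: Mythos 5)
Your proof is correct, but it runs along a different track from the paper's. The paper's argument is purely symmetry-based: since every one-dimensional (Cartan) subalgebra of $su(2)$ meets the adjoint orbit $\mathcal{O}_X$ in an antipodal pair $\{\xi,-\xi\}$, the equality $F(\xi)=F(-\xi)=l$ combined with $\alpha(\xi)=\alpha(-\xi)$ forces $\beta(\xi)=0$ at once; hence $\beta\equiv 0$, and then the constancy of $\alpha=F$ on the round sphere $\mathcal{O}_X$ gives bi-invariance. You instead isolate the radical, square, and observe that the resulting degree-two polynomial must lie in the ideal of the sphere, i.e.\ be a multiple of $|\xi|_{bi}^2-r^2$, after which coefficient matching kills $V$ and equalizes the eigenvalues of $A$. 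Both arguments hinge on the same special feature of $SU(2)$ — that $\mathcal{O}_X$ is the full round sphere — but the paper's antipodal trick is shorter and conceptually isolates why reversibility forces $\beta=0$, while your polynomial-identity method is more systematic and is in fact the same mechanism the authors themselves deploy later for $SU(3)$ (where the orbit is no longer a full sphere and the antipodal argument is unavailable). Two small points worth making explicit in your write-up: the squaring step is a one-way implication (which is all you need, since you only use that the constant-length identity implies the polynomial identity), and the claim that a degree-two polynomial vanishing on the sphere is a multiple of $|\xi|_{bi}^2-r^2$ can be justified elementarily by a dimension count on degree-$\le 2$ polynomials restricted to the sphere, without invoking the full real Nullstellensatz-type statement about the ideal.
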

\begin{proof}
 Suppose there is a non-vanishing $X\in su(2)$ which generates a Killing vector field of
constant length for $F=\alpha+\beta$. Then
any one dimensional subalgebra of $su(2)$, which is in fact a Cartan subalgebra,
 contains exactly two vectors in the orbit
of $X$. These two vectors are opposite to each other but have the same $F$ length and
$\alpha$ length. So the restriction of $\beta$  to each one dimensional subalgebra must vanish.
Thus $\beta=0$. Hence $F$ is Riemannian. Further, the above argument also implies that the orbit $\mathcal{O}_X$ has the same length for $\alpha$.
Hence $\alpha$ is bi-invariant.
\end{proof}

On larger simple compact Lie groups, there may exist Clifford-Wolf translations
generated by a Killing vector field of constant length for a left invariant Randers metric which are not of constant length with respect to the underlying
Riemannian metric. Here we will only discuss the examples of $SU(3)$.
But the techniques used below can be applied to other classical compact Lie groups.

Let $X=\sqrt{-1}\mbox{diag}(-1,-1,2)$ and
$V=\sqrt{-1}\mbox{diag}(a,b,c)$ be two matrices in the Lie algebra
${\mathfrak su}(3)$. Choose the bi-invariant metric on $SU(3)$ whose
restriction to
 the Lie algebra is $<P,Q>_{bi}=\mbox{tr}P^* Q$.
We will show how to find a Riemannian metric $\alpha$ such
that for the Randers metric $F(\cdot)=\alpha(\cdot)+<\cdot,V>_{bi}$, the Killing vector field
generated by $X$ has length $l$.

In fact our choice of $X$ is very restrictive, and almost unique in some sense. First, $X$ must have exactly two eigenvalues, i.e., it is invariant under the
action of a
$\mathbb{Z}_2$ subgroup in the Weyl group. Otherwise the intersection of the orbit $\mathcal{O}_X$
with any Cartan subalgebra, which is $2$ dimensional and contains $6$ points. But any ellipse passing
the six points must be a circle centered at the origin. This means the Randers metric
$F$ has to be Riemannian, i.e.,  it has a  vanishing $\beta$,
when it is restricted to any Cartan subalgebra. Then $F$ is Riemannian following a similar
argument as for $SU(2)$. This means that, up to a non-zero scalar, $X$ can be chosen to have the
diagonal form $\sqrt{-1}\mbox{diag}(-1,-1,2)$. Obviously we can assume that $X$ and $V$ lies in the
 same Cartan subalgebra, so that $V$ has a diagonal form at
the same time.

The following proposition shows that, if we can find the required Riemannian metric $\alpha$, then it must be totally determined by $X$, $V$ and $l$.
\begin{proposition}
For any vectors $X$ and $V$ in ${\mathfrak su}(3)$, and a pre-chosen $l>0$, there is at most one
Randers metric $F(y)=\alpha(y)+<y,V>_{bi}$, $y\in {\mathfrak su}(3)$, such that the
Killing vector field generated by $X$ has constant length $l$.
\end{proposition}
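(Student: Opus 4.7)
The plan is to reduce the claim to a statement about quadratic forms on $\mathfrak{g} = \mathfrak{su}(3)$ restricted to the adjoint orbit $\mathcal{O}_X$, and then to settle it by $SU(3)$-equivariance. First I would translate the constant-length condition into an orbit equation: since the length of the Killing field generated by $X$ at a point $g \in G = SU(3)$ equals $F_i(\mathrm{Ad}_{g^{-1}}X)$, setting this equal to $l$, squaring, and letting $Y = \mathrm{Ad}_{g^{-1}}X$ range over the orbit $\mathcal{O}_X$ gives
\[
\alpha_i(Y, Y) \;=\; (l - \langle Y, V\rangle_{bi})^2 \qquad \text{for every } Y \in \mathcal{O}_X.
\]
For any two candidate solutions $\alpha_1, \alpha_2$, the difference $q := \alpha_1 - \alpha_2$ is a symmetric bilinear form on $\mathfrak{g}$ whose associated homogeneous quadratic polynomial vanishes identically on $\mathcal{O}_X$; the proposition then reduces to showing $q \equiv 0$.

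Next I would use the $\mathrm{Ad}_G$-invariance of $\mathcal{O}_X$. The subspace $W \subseteq S^2\mathfrak{g}^*$ of quadratic forms vanishing on $\mathcal{O}_X$ is $G$-invariant, and the $G$-module $S^2\mathfrak{g}^*$ decomposes as $\mathbf{1} \oplus \mathbf{8} \oplus \mathbf{27}$ under $SU(3)$; hence $W$ is a direct sum of some subcollection of these irreducibles. It therefore suffices to exhibit in each isotypic component a single quadratic form that does not vanish on $\mathcal{O}_X$. The $\mathbf{1}$-piece is spanned by $Y \mapsto \mathrm{tr}(Y^2)$, which equals the nonzero constant $\mathrm{tr}(X^2)$ on $\mathcal{O}_X$ (note that $X \neq 0$, forced by $l > 0$). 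For the $\mathbf{8}$-piece I would use the family $q_T(Y) = \mathrm{tr}(T Y^2)$ with $T$ traceless Hermitian; if every such $q_T$ vanished on $\mathcal{O}_X$, then $T$ would be Hilbert--Schmidt-orthogonal to the $\mathrm{Ad}_G$-invariant linear span of $\{Y^2 : Y \in \mathcal{O}_X\}$, but this span contains the non-scalar $X^2$ and, by irreducibility of the relevant $G$-modules, equals the full space of $3\times 3$ Hermitian matrices, forcing $T = 0$.

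The main obstacle will be the $\mathbf{27}$-piece. Every form there can be written as $\hat q_S(Y) = \langle S, (Y\otimes Y)_{\mathbf{27}}\rangle$ for some $S \in \mathbf{27}$, and ruling out this component requires verifying that $(X\otimes X)_{\mathbf{27}} \neq 0$, so that the span of $\{(Y\otimes Y)_{\mathbf{27}} : Y \in \mathcal{O}_X\}$, being a nonzero $G$-invariant subspace of the irreducible $\mathbf{27}$, equals $\mathbf{27}$. Since only the conjugacy class of $X$ matters, I would reduce to $X$ in a Cartan subalgebra; in the illustrative case $X = -2\sqrt{3}\,T_8$ in Gell-Mann notation, the isotropy is $K = S(U(2)\times U(1))$ and the $K$-fixed subspace of $\mathrm{Sym}^2\mathfrak{g}$ is three-dimensional, with the explicit basis $\{T_8\otimes T_8,\ \sum_{a=1}^{3} T_a\otimes T_a,\ \sum_{a=4}^{7}T_a \otimes T_a\}$. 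A direct projection computation using the symmetric structure constants $d_{abc}$ of $\mathfrak{su}(3)$ then shows that $(T_8\otimes T_8)_{\mathbf{27}} \neq 0$, eliminating the $\mathbf{27}$-component of $W$, so $W = 0$ and $\alpha_1 = \alpha_2$.
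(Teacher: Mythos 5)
Your argument is correct in outline, and it takes a genuinely different route from the paper's. You pass to the multiplicity-free decomposition $S^2\mathfrak{su}(3)^*\cong\mathbf{1}\oplus\mathbf{8}\oplus\mathbf{27}$, so that the space $W$ of quadratic forms vanishing on $\mathcal{O}_X$ is a sub-sum of these three pieces, and uniqueness reduces to showing that each of the three isotypic projections of $X\otimes X$ is nonzero. The paper instead argues slice by slice: taking $X=\sqrt{-1}\,\mathrm{diag}(-1,-1,2)$ (it declares at the outset that only this conjugacy class need be considered, leaning on the preceding discussion of which $X$ can carry a non-Riemannian solution at all), each Cartan subalgebra meets $\mathcal{O}_X$ in three points $X_1,X_2,X_3$; the centralizer $\mathfrak{Z}(X_1)\cong\mathfrak{su}(2)\oplus\mathbb{R}$ meets $\mathcal{O}_X$ in $X_1$ together with a round $2$-sphere through $X_2$ and $X_3$, and a quadratic form vanishing on the cone over that sphere and at the extra point $X_1$ off the cone must vanish on all of $\mathfrak{Z}(X_1)$, hence on every Cartan subalgebra, hence everywhere. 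Your version is cleaner and is not tied to a particular conjugacy class, which better matches the proposition's ``for any $X$ and $V$'' phrasing; what it buys is a uniform criterion, at the cost of a representation-theoretic computation. The one step you must still close is the $\mathbf{27}$-projection for \emph{general} $X$, which you only verify for $X\propto T_8$. It does hold for every $X\neq 0$: conjugating $X$ into the Cartan subalgebra $\mathfrak{t}$, one has $X\otimes X\in S^2\mathfrak{t}$, while inside the zero-weight space of $S^2\mathfrak{g}$ one checks $S^2\mathfrak{t}\cap(\mathbf{1}\oplus\mathbf{8})=0$ (both the Casimir element and the $d$-image of any nonzero $H\in\mathfrak{t}$ have nonvanishing components along the vectors $E_\alpha\odot E_{-\alpha}$, and the resulting linear system has only the trivial solution), so a nonzero $X\otimes X\in S^2\mathfrak{t}$ cannot have vanishing $\mathbf{27}$-part. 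Together with your $\mathbf{1}$- and $\mathbf{8}$-checks ($\mathrm{tr}(X^2)\neq 0$, and $X^2$ is never scalar for $0\neq X\in\mathfrak{su}(3)$ since $a+b+c=0$ and $a^2=b^2=c^2$ force $a=b=c=0$), this completes your proof.
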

\begin{proof}
 We only need  to consider $X=\sqrt{-1}\mbox{diag}(-1,-1,2)$.
If there are two $\alpha_1$ and $\alpha_2$ with corresponding $F_1$ and $F_2$,
such that $F_1=F_2$ on $\mathcal{O}_X$, then $\alpha_1=\alpha_2$ on $\mathcal{O}_X$.
We only need  to prove that $\alpha_1$ and $\alpha_2$ are equal on each Cartan subalgebra.
Each Cartan subalgebra of ${\mathfrak su}(3)$ contains $3$ points of $\mathcal{O}_X$. If we denote
the three points of $\mathcal{O}_X$ by $X_1$, $X_2$ and $X_3$, then the centralizer
$\mathfrak{Z}(X_1)\subset {\mathfrak su}(3)$ is isomorphic to ${\mathfrak su}(2)\oplus\mathbb{R}$, with its group
$G'\cong S(S^1\times U(2))\subset SU(3)$. The intersection of $O_X$ with $\mathfrak{Z}(X_1)$
contains $X'$ and the $Ad_{G'}$ orbit $\mathcal{O}'_{X_2}$, which is a $2$ dimensional
sphere passing both
$X_2$ and $X_3$. Restricted to $\mathfrak{Z}(X_1)$, $\alpha_1-\alpha_2$ vanishes on the cone
generated by $\mathcal{O}'(X_2)$ and the point $X_1$ outside the cone. So $\alpha_1\equiv\alpha_2$
on $\mathfrak{C}(X_1)$, which contains the Cartan subalgebra we consider.
\end{proof}

If $V=\sqrt{-1}\mbox{diag}(a,b,c)\in {\mathfrak su}(3)$ also has two distinct eigenvalues,
 we can assume
$V=\lambda\sqrt{-1}\mbox{diag}(-1,-1,2)$ by a suitable conjugation.
Notice that both $X$ and $V$ are invariant under the $Ad$-action of $G'=S(U(2)\times S^1)$.
From the uniqueness result of  Proposition \ref{prop}, the linear metric $\alpha$ on ${\mathfrak su}(3)$
must also be $Ad_{G'}$-invariant.
So it must have the form
\begin{equation}
\alpha(A)=x\mbox{tr}Q^2+y u^* u + z q^2,
\end{equation}
with $x,y,z\in\mathbb{R}$,
where
$$A=\sqrt{-1}\left(
            \begin{array}{cc}
              Q & u \\
              u^* & q \\
            \end{array}
          \right)
\in {\mathfrak su}(3),$$ with $Q$  Hermitian symmetric and $q=-\mbox{tr}Q$.

For any $X'=UXU^*$ in the orbit $\mathcal{O}_X$, with $$U=\left(
                                                           \begin{array}{cc}
                                                             U' & v_2 \\
                                                             v_1^* & u \\
                                                           \end{array}
                                                         \right),$$
$\alpha(X')^2$ can be expressed as a polynomial of $t=|u|^2$, i.e.
\begin{equation}
\alpha(X')^2=(9x-9y+9z)t^2+(-12x+9y-6z)t+(5x+z).
\end{equation}
By the assumption it is  equal to
\begin{equation}
(<X',V>_{bi}-l)^2=[9\lambda t-(l+3\lambda)]^2,
\end{equation}
Since the Killing vector field has constant length $l$ with respect
to $F$. By comparing the coefficients of $t$, all the parameters
$x$, $y$ and $z$ can be determined by $l$ and $\lambda$. As long as
we require $\lambda$ to be nonzero and close to $0$, the Killing
vector field generated by $X$ produces a  Clifford-Wolf translation
with respect to $F$ but not with respect to $\alpha$.

In general, even if we do not require $V=\sqrt{-1}\mbox{diag}(a,b,c)\in {\mathfrak su}(3)$ to have two distinct eigenvalues,
$X$ and $V$ are still invariant under the $Ad$-action of a maximal torus in $SU(3)$ consisting of the
diagonal matrices. Then  the metric $\alpha$ must have the form
\begin{equation}
\alpha(A)=x_1 a_{11}^2+x_2 a_{22}^2 + x_3 a_{33}^2 + y_1 |u|^2 +y_2 |v|^2 + y_3 |w|^2,
\end{equation}
where
\begin{equation}
A=\sqrt{-1}\left(
    \begin{array}{ccc}
      a_{11} & u & v \\
      \bar{u} & a_{22} & w \\
      \bar{v} & \bar{w} & a_{33} \\
    \end{array}
  \right)\in su(3).
\end{equation}
For any
\begin{eqnarray}
X' &=& \sqrt{-1}\left(
         \begin{array}{ccc}
           u       & v & 0 \\
           -\bar{v} & \bar{u} & 0 \\
           0 & 0 & 1 \\
         \end{array}
       \right)
       \left(
         \begin{array}{ccc}
           -1 & 0 & 0 \\
           0 & s & w \\
           0 & \bar{w} & 1-s \\
         \end{array}
       \right)
       \left(
         \begin{array}{ccc}
           \bar{u} & -v & 0 \\
           \bar{v} & u & 0 \\
           0 & 0 & 1 \\
         \end{array}
       \right)\nonumber \\
    &=&\left(
         \begin{array}{ccc}
           s|v|^2-|u|^2 & (s+1)uv & v w \\
           (s+1)\bar{u}\bar{v} & s|u|^2-|v|^2 & \bar{u}w \\
           \bar{v}\bar{w} & u\bar{w} & 1-s \\
         \end{array}
       \right)\in \mathcal{O}_X,
\end{eqnarray}
with $|u|^2+|v|^2=1$, $s\in\mathbb{R}$ and $|w|^2=2+s-s^2$,
$\alpha(X')^2$ can be expressed as a polynomial of $s$ and $t=|u|^2$.
%\begin{equation}
%\alpha(X')
%\end{equation}
Compare it with
\begin{equation}
(<X',V>_{bi}-l)^2=[a(s(1-t)-t)+b(st-1+t)+c(1-s)-l]^2,
\end{equation}
in which $c=-a-b$, we get a system of linear equations, from which
we can uniquely solve all the coefficients in $\alpha$, for suitable
choices of $l$, $a$ and $b$.

To conclude, we see the technique used here can prove a more general
proposition.

\begin{proposition}
On $SU(n)$ with $n>2$, there are non-Riemannian left invariant
Randers metric for which we can find nonzero Killing fields of
constant length from $su(n)$.
\end{proposition}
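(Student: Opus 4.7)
The plan is to imitate and extend the $SU(3)$ construction. Take $X=\sqrt{-1}\,\mathrm{diag}(-1,\ldots,-1,n-1)\in \mathfrak{su}(n)$ and $V=\lambda X$ with a small nonzero parameter $\lambda$. Both lie in the Lie algebra of the centralizer $G'=S(U(n-1)\times U(1))$, whose adjoint orbit of $X$ is diffeomorphic to $\mathbb{CP}^{n-1}$. First I would invoke the analogue of Proposition 4.3 for $SU(n)$: if a left-invariant Randers metric $F=\alpha+\langle\cdot,V\rangle_{bi}$ gives $X$ constant length, the uniqueness argument via the double-coset structure forces the underlying inner product on $\mathfrak{su}(n)$ to be $\mathrm{Ad}_{G'}$-invariant. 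The $G'$-isotypic decomposition
\[
\mathfrak{su}(n)=\mathfrak{su}(n-1)\oplus \mathbb{R}X\oplus \mathfrak{m},\qquad \mathfrak{m}\cong \mathbb{C}^{n-1},
\]
has three irreducible real summands, so the space of admissible $\alpha$ is a $3$-parameter family, say $(x,y,z)$ with $x,y,z>0$.

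Next I would parametrize the orbit by writing $X'=\sqrt{-1}(n\,uu^{*}-I_n)$ for a unit vector $u\in\mathbb{C}^n$ and noting that the $G'$-orbits on $\mathcal{O}_X$ are exactly the level sets of $t:=|u_n|^{2}\in[0,1]$. Since $\alpha$ and $\langle\cdot,V\rangle_{bi}$ are both $\mathrm{Ad}_{G'}$-invariant, they descend to functions of $t$ alone. A direct block decomposition with $u=(w,s)$ shows
\[
\langle X',V\rangle_{bi}=\lambda(n^{2}t-n),\qquad \alpha(X')^{2}=x\,\mathrm{tr}(Q_{0}^{2})+y\left(\tfrac{nt-1}{n-1}\right)^{2}\|Z\|^{2}+z\cdot n^{2}t(1-t),
\]
each summand on the right being a polynomial of degree $\le 2$ in $t$. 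Then the Clifford--Wolf condition $\alpha(X')=l-\langle X',V\rangle_{bi}$, squared, becomes the requirement that a specific quadratic in $t$ equals $(l-\lambda(n^{2}t-n))^{2}$, i.e. matching the coefficients of $1,t,t^{2}$ yields a linear system of three equations in the three unknowns $x,y,z$.

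The endgame is then a continuity argument: at $\lambda=0$ the system has the bi-invariant solution with $x,y,z$ all equal to the appropriate positive constant (this is just the bi-invariant Killing-form metric, for which every $X\in\mathfrak{su}(n)$ trivially has constant length). Provided the $3\times 3$ coefficient matrix is nonsingular at $\lambda=0$, the implicit function theorem produces a smooth family of solutions $(x(\lambda),y(\lambda),z(\lambda))$ for $\lambda$ in a neighborhood of $0$, and by continuity these remain positive. Because $\beta=\langle\cdot,V\rangle_{bi}\not\equiv 0$ for $\lambda\ne 0$, the resulting Randers metric is non-Riemannian while $X$ generates a Killing vector field of length $l$.

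The main obstacle I foresee is precisely verifying the nondegeneracy of this $3\times 3$ linear system, equivalently that the three $t$-polynomials arising from the $\mathfrak{su}(n-1)$, $\mathbb{R}X$, and $\mathfrak{m}$ pieces of $X'$ are linearly independent as functions of $t$. Once this is established (by inspecting their leading and constant terms, noting that only the $\mathfrak{m}$-term vanishes at $t=0$ and $t=1$ while the $Z$-component term is a perfect square in $nt-1$ and the $\mathfrak{su}(n-1)$ contribution provides the missing direction) the existence statement follows for every $n>2$, and the explicit $SU(3)$ computation given above is simply the case $n=3$.
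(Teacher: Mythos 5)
Your argument is essentially the paper's own: the authors carry out exactly this computation for $SU(3)$ (the $\mathrm{Ad}_{S(U(2)\times U(1))}$-invariant three-parameter ansatz for $\alpha$, the orbit parametrized by $t=|u|^2$, and matching of quadratic polynomials in $t$ against $(\langle X',V\rangle_{bi}-l)^2$), and then assert that the same technique yields the general $SU(n)$ statement, which is precisely the extension you supply. The one point you flag as an obstacle --- nonsingularity of the $3\times 3$ system --- is immediate: the three isotypic contributions are positive multiples of $(1-t)^2$, $(nt-1)^2$ and $t(1-t)$, the first carrying a factor proportional to $(n-2)/(n-1)$ (hence nonzero exactly when $n>2$, consistent with the failure on $SU(2)$), and evaluating any vanishing linear combination at $t=1$ and then at $t=0$ forces all three coefficients to vanish.
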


{\bf Acknowledgement. }

\end{document}